\newtheorem{theorem}{Theorem}[section]
\newtheorem{lemma}[theorem]{Lemma}
\newtheorem{proposition}[theorem]{Proposition}
\newtheorem{corollary}[theorem]{Corollary}
\theoremstyle{definition}
\theoremstyle{remark}
\newtheorem{remark}[theorem]{Remark}
\newcommand{\A}{{\mathcal{A}}}
\newcommand{\field}[1]{\mathbb{#1}}
\newcommand{\C}{{\field{C}}}
\newcommand{\la}{{\langle}}
\newcommand{\ra}{\rightarrow}
\newcommand{\id}{{\iota}}
\newcommand{\ot}{{\otimes}}
\newcommand{\tp}{{\widehat{\otimes}}}
\newcommand{\vtp}{{\overline{\otimes}}}
\newcommand{\T}{{\mathcal{T}}}
\newcommand{\B}{{\mathcal{B}}}
\newcommand{\om}{{\omega}}
\newcommand{\fee}{{\varphi}}
\newcommand{\G}{{\field{G}}}
\newcommand{\LL}{{L^\infty(\G)}}
\newcommand{\LO}{{L^1(\G)}}
\newcommand{\LT}{{L^2(\G)}}
\newcommand{\Tr}{{\mathcal{T}(L^2(\G))}}
\newcommand{\TG}{{\T_\star(\G)}}
\newcommand{\LLL}{{L^\infty(\hat\G)}}
\newcommand{\LLLL}{{L^\infty(\hat\G')}}
\newcommand{\loneqg}{L^1(\mathbb{G})}
\newcommand{\linfqg}{L^\infty(\mathbb{G})}
\numberwithin{equation}{section}
\begin{document}

\title[Duality, Cohomology, and Geometry of Quantum Groups]{Duality, Cohomology, and Geometry \\ of Locally Compact Quantum Groups}

%    Information for first author
\author{Mehrdad Kalantar}

\address{Mehrdad Kalantar \newline School of Mathematics and Statistics,
Carleton University, Ottawa, Ontario, K1S 5B6, Canada}
\email{mkalanta@math.carleton.ca}

%    Information for second author
\author{Matthias Neufang}

\address{Matthias Neufang \newline School of Mathematics and Statistics,
Carleton University, Ottawa, Ontario, K1S 5B6, Canada}
\email{mneufang@math.carleton.ca}

\vspace{-0.7cm}
\address{${}$ \vspace{-0.4cm}\newline Universit\'{e} Lille 1 -- Sciences et Technologies,
UFR de Math\'{e}matiques, Laboratoire de Math\'{e}matiques Paul Painlev\'{e}  (UMR CNRS 8524),
59655 Villeneuve d'Ascq C\'{e}dex, France}
\email{Matthias.Neufang@math.univ-lille1.fr}
${}$ \\[-5ex]
\address{${}$ \vspace{-0.4cm}\newline The Fields Institute for Research in
Mathematical Sciences, Toronto, Ontario,
Canada M5T 3J1}
\email{mneufang@fields.utoronto.ca}

%    Abstract is required.
\begin{abstract}
In this paper we study  various
convolution-type algebras associated with a locally compact quantum group
from cohomological and geometrical points of view. The quantum group duality
endows the space of trace class operators
over a locally compact quantum group with two products which are operator versions of
convolution and pointwise multiplication, respectively; we
investigate the relation between these two products, and derive a formula
linking them.
Furthermore, we define some canonical module structures on these
convolution algebras, and prove that certain topological properties of a
quantum group, can be completely characterized in terms of cohomological properties of these modules.
We also prove a quantum group version of a theorem of  Hulanicki characterizing group amenability.
Finally, we study the Radon--Nikodym property of the $L^1$-algebra of locally compact quantum groups.
In particular, we obtain a criterion that distinguishes discreteness from the Radon--Nikodym property
in this setting.
\end{abstract}

\maketitle

\section{Introduction}
The most fundamental objects in abstract harmonic analysis are algebras of functions on
a locally compact group $G$, endowed with the 
convolution, respectively, pointwise product,
such as the group algebra $L^1(G)$ and
the Fourier algebra $A(G)$.
Despite being dual to each other in a canonical way, these two products cannot be compared and linked
to one another in an obvious way, because they live on very different spaces.
However, as we shall show in this paper, it is the duality of locally compact quantum groups $\G$
that provides a common ground on which these two products can be studied
simultaneously on one space, namely the trace class operators $\Tr$.

Our goal in this paper is to study locally compact quantum groups $\G$ from cohomological and
geometrical points of view. The fact that the co-multiplication of a locally compact quantum group 
is implemented by its fundamental unitary, enables one to lift the product
of $\LO$ to $\Tr$.
Therefore, $\Tr$ can be canonically endowed with two products which
arise from $\G$ and $\hat\G$; in the classical case of
a locally compact group $G$, these products are indeed operator
versions of the convolution and the pointwise products.

The paper is organized as follows. The preliminary definitions and results which
are needed, are briefly recalled in section 2.
In section 3, we first define the quantum version of the convolution
and pointwise products on the space $\Tr$ of trace class operators
on the Hilbert space $\LT$ of a locally compact quantum group $\G$.
We then study the basic properties of these algebras, and use
the duality theory of locally compact quantum groups to derive a formula linking
the two products associated with $\G$ and $\hat \G$.

In section 4,  we consider various module structures associated with
convolution algebras over a locally compact quantum group, and investigate their cohomological properties. 
We show that topological properties of a locally compact quantum group $\G$
are equivalent to cohomological properties of certain convolution algebras over $\G$.
In \cite{M-ths}, the second-named author  introduced and studied the above-mentioned convolution product
on $\mathcal{T}(L^2(G))$ for a locally compact group $G$. The corresponding results on
the equivalence of topological and cohomological properties in this situation were obtained
in \cite{Pir}.
We also establish in this section a quantum group version of a theorem of Hulanicki
stating that a discrete group is amenable if and only if its left regular representation is an
isometry on positive elements of $l^1(G)$: indeed, we show that for any co-amenable
locally compact quantum group $\G$, the latter condition is equivalent to
co-amenability of the dual $\hat \G$, i.e., to $\G$ having Reiter's property $(P_2)$,
as introduced and studied in \cite{VolMatt}.

In the last section, for a locally compact quantum group $\G$, we study a geometric property of $\LO$, namely the Radon--Nikodym property (RNP).
While, for a locally compact group $G$, the space $L^1(G)$ has the RNP if and only if $G$ is discrete,
the dual statement, with $L^1(G)$ replaced by the Fourier algebra $A(G)$, is not true in general.
So, the RNP and discreteness are not equivalent for arbitrary locally compact quantum groups.
We characterize the difference  between both properties in this general setting in terms of a covariance condition.

\par
The results in this paper are based on \cite{mehrdad-thesis},
written under the supervision of the second-named author.

\section{Preliminaries}

We recall from  \cite {KV2} and \cite {Vaes} that a (von
Neumann algebraic) \emph{locally compact quantum group} $\G$ is a
quadruple $(\linfqg, \Gamma, \varphi, \psi)$, where $\linfqg$ is a
von Neumann algebra with a co-multiplication
$$\Gamma: \linfqg\to \linfqg \vtp \linfqg,$$, 
and $\varphi$ and  $\psi $ are  (normal faithful semifinite) left and right
Haar weights on $\linfqg$, respectively. For each
locally compact quantum group $\G$, there exist a  \emph{left fundamental unitary
operator}  $W$ on $L^{2}(\G, \varphi)\otimes L^{2}(\G, \varphi)$
and a \emph{right fundamental unitary operator} $V$ on $L^{2}(\G,
\psi)\otimes L^{2}(\G, \psi)$
which satisfy  the  \emph{pentagonal relation}
 \begin{equation}
\label {F.pentagonal} W_{12} W_{13}W_{23} = W_{23} W_{12}
\quad\mbox{and} \quad V_{12} V_{13}V_{23} = V_{23} V_{12}.
\end{equation}
The co-multiplication $\Gamma$ on $\linfqg$ can be expressed as
\begin{equation}
\label {F.com} \Gamma(x) = W^{*}(1\otimes x)W= V(x\otimes 1)V^{*}
\quad(x \in \linfqg).
\end{equation}
We can identify $L^{2}(\G, \varphi)$ and $L^{2}(\G, \psi)$ (cf. \cite [Proposition 2.11]{KV2}), and we simply
use $L^{2}(\G)$ for this Hilbert space in the rest of this paper.

Let $\loneqg$ be the predual of $\linfqg$. Then the pre-adjoint of
$\Gamma$ induces on $\loneqg$ an associative completely
contractive multiplication
\begin{equation}
\label {F.mul} \star  :  f_{1} \otimes f_{2} \in L^{1}(\G)\hat
\otimes L^{1}(\G) \to f_{1} \star f_{2} = (f_{1} \otimes
f_{2})\circ \Gamma \in L^{1}(\G).
\end{equation}

A locally compact quantum group $\G$ is called \emph{co-amenable} if
$\loneqg$ has a bounded left (equivalently, right or two-sided)
approximate identity (cf. \cite[Theorem 3.1]{B-T}).

The \emph{left regular representation} $\lambda : L^{1}(\G) \to
\B(L^{2} (\G))$ is defined by
 \[
\lambda :  L^{1}(\G)\ni f   \mapsto \lambda(f) = (f\otimes \iota)(W)
\in \B(L^{2}(\G)),
 \]
which is an injective and completely contractive algebra homomorphism
from
 $L^{1}(\G)$ into $\B(L^{2} (\G))$. Then  
 $$L^{\infty} (\hat \G)={\{\lambda(f): f\in \loneqg\}}''$$
is the von Neumann algebra associated with the dual quantum group
$\hat \G$. Analogously, we have the  \emph{right regular
representation} $\rho : L^{1}(\G) \to \B(L^{2} (\G))$ defined by
 \[
\rho : L^{1}(\G)\ni f   \mapsto \rho(f) = (\iota\otimes f)(V) \in
\B(L^{2}(\G)),
 \]
which is also an injective and completely contractive algebra
homomorphism from
 $L^{1}(\G)$ into $\B(L^{2} (\G))$. Then 
 $$L^{\infty}(\hat \G') ={\{\rho(f): f\in \loneqg\}}''$$
  is the von Neumann algebra associated with  the dual (commutant) quantum group $\hat \G'$. It
follows that 
$$W \in \linfqg \bar \otimes L^{\infty}(\hat \G) \ \ \ \text{and} \ \ \ V \in L^{\infty}(\hat \G')\bar \otimes \linfqg.$$ 
 
We obtain the corresponding reduced quantum group
$C^*$-subalgebra
\[
C_{0}(\G) = \overline{\{(\iota \otimes \hat f)(W) : \hat f\in L^{1}(\hat \G)\}}^{\|\cdot\|} =
\overline{\{\hat f' \otimes \iota)(V) : \hat f'\in L^{1}(\hat \G')}^{\|\cdot\|}
\]
of $\linfqg$ with the co-multiplication
\[
\Gamma :  C_{0}(\G)\to  M(C_{0}(\G)\otimes
C_{0}(\G)),
\]
where $M(C_{0}(\G)\otimes C_{0}(\G))$ is the multiplier algebra of
the minimal $C^*$-algebra tensor product $C_{0}(\G)\otimes
C_{0}(\G)$.

Let $M(\G)$ denote the operator dual $C_{0}(\G)^{*}$ of $C_{0}(\G)$.
The space $M(\G)$ is a completely
contractive \emph{dual Banach algebra} (i.e., the multiplication
on $M(\G)$ is separately weak$^*$ continuous),
and $M(\G)$ contains $L^{1}(\G)$ as a norm closed
two-sided ideal via the embedding 
$$L^1(\G) \rightarrow M(\G) \ : \ f \mapsto f_{|C_0(\G)}.$$

If $G$ is a locally compact group, then $C_{0}(\G_{a})$ is the
$C^*$-algebra $C_{0}(G)$ of continuous functions on $G$ vanishing at
infinity, and $M(\G_{a})$ is the measure algebra $M(G)$ of $G$.
Correspondingly, $C_{0}(\hat \G_{a})$ is the left group $C^*$-algebra
$C^{*}_{\lambda}(G)$ of $G$, and $C_{0}(\hat
\G^{\prime}_{a})$ is the right group $C^*$-algebra $C^{*}_{\rho}(G)$
of $G$. Hence, we have $M(\hat \G_{a})= B_{\lambda}(G)$ and
$M(\hat \G_{a}^{\prime})= B_{\rho}(G)$.

%-----------------------------
%  PROPOSITION 2.2
%-----------------------------

%%% Remark 2.3

We also briefly recall some standard definitions and notations from the cohomology theory of Banach algebras (cf. \cite{Hel}).
Actually, as one might expect, here in the general setting of locally compact quantum groups, we need to
take the quantum (operator space) structure of the underlying Banach spaces into
account as well. So we work in the category of operator spaces;
we shall define our module structures, and their corresponding objects, in the quantized
Banach space category as well.

A completely bounded linear map $\sigma: X\ra Y$ from an operator space $X$ into an operator space $Y$ is called
{\it admissible} if it has a completely bounded right inverse.

Let $\A$ be a Banach algebra and $P$ be a right $\A$-module. $P$ is called \emph{projective} if for all
$\A$-modules $X$ and $Y$, any admissible morphism $\sigma: X\ra Y$, and any morphism
$\rho: P\ra Y$, there exists a morphism $\phi: P\ra X$ such that $\sigma\circ\phi = \rho$.

Denote by $X\triangleleft\A$ the closed linear span of the set
$$\{x\triangleleft a :a\in \A , x\in X\}\subseteq X.$$
 Then $X$ is called {\it essential} if $X\triangleleft\A = X$.

Many categorical statements which hold in the category of Banach spaces,
also hold in this setting with an obvious slight categorical modification.
In particular, the following result which is well-known in the classical setting (cf. \cite{Hel2});
we will use it frequently in our work.
\begin{theorem}\label{33383}
an essential right $\A$-module $X$ is projective 
if and only if there exists a morphism $\psi:X\rightarrow X\tp\A$ such that $\mathfrak{m}\circ\psi = \id_X$, 
where $\mathfrak{m}:X\tp \A\rightarrow X$ is the canonical module action morphism, and $X\tp\A$ is regarded as a right $\A$-module, 
via the action $(x\otimes a)\triangleleft b = x\otimes ab$.
\end{theorem}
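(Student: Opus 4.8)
The plan is to prove both implications around the canonical action morphism $\mathfrak m: X\tp\A\ra X$, $\mathfrak m(x\otimes a)=x\triangleleft a$, regarding $X\tp\A$ as the ``free'' (induced) right $\A$-module appearing in the statement. The condition $\mathfrak m\circ\psi=\id_X$ expresses that $X$ is a module retract of $X\tp\A$, so morally $X$ inherits projectivity from the free module; I will nonetheless argue directly in both directions, so that the two uses of the defining lifting property stay transparent.

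For sufficiency, assume a morphism $\psi$ with $\mathfrak m\psi=\id_X$ is given, and let $\sigma:Y\ra Z$ be an admissible morphism with completely bounded right inverse $\tau:Z\ra Y$ ($\sigma\tau=\id_Z$), together with a morphism $\rho:X\ra Z$. First I would define $\Phi:X\tp\A\ra Y$ on elementary tensors by $\Phi(x\otimes a)=\tau\rho(x)\triangleleft a$; this is completely bounded, being induced by a completely bounded bilinear map, and is a right $\A$-morphism since $\Phi((x\otimes a)\triangleleft b)=\tau\rho(x)\triangleleft(ab)=\Phi(x\otimes a)\triangleleft b$. Because $\sigma$ is a morphism and $\sigma\tau=\id_Z$, one computes $\sigma\Phi(x\otimes a)=\sigma(\tau\rho(x))\triangleleft a=\rho(x)\triangleleft a=\rho(\mathfrak m(x\otimes a))$, that is, $\sigma\Phi=\rho\,\mathfrak m$. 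Setting $\phi=\Phi\circ\psi:X\ra Y$, a composite of morphisms, I obtain $\sigma\phi=\rho\,\mathfrak m\,\psi=\rho$, which is exactly the required lift. This direction uses neither essentiality nor any approximate identity.

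For necessity, the natural idea is to feed the admissible morphism $\mathfrak m$ and the map $\rho=\id_X$ into the definition of projectivity, which would return $\phi=:\psi$ with $\mathfrak m\psi=\id_X$. The obstacle is that $\mathfrak m$ need not be admissible: its range is the dense, but possibly proper, subspace $X\triangleleft\A$, and there is no reason for a completely bounded linear right inverse to exist on the nose. I would circumvent this by passing to the operator space unitization $\A_+=\A\oplus\C$: extend $X$ to a unital right $\A_+$-module via $x\triangleleft(a+\lambda 1):=x\triangleleft a+\lambda x$, and consider $\mathfrak m_+:X\tp\A_+\ra X$. Now $\mathfrak m_+$ is admissible, since $x\mapsto x\otimes 1$ is a completely bounded right inverse. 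Viewing $\mathfrak m_+$ as an $\A$-morphism and applying projectivity of $X$ to lift $\id_X$ along it yields an $\A$-morphism $\psi_+:X\ra X\tp\A_+$ with $\mathfrak m_+\psi_+=\id_X$.

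It remains to push $\psi_+$ into the non-unital part, and this is where essentiality enters. Using that the operator space projective tensor product commutes with $\ell^1$-direct sums, I identify $X\tp\A_+\cong(X\tp\A)\oplus X$ and write $\psi_+=(\psi_0,\,j)$ with $\psi_0:X\ra X\tp\A$ and $j:X\ra X$ completely bounded. Comparing the $X$-components of the morphism identity $\psi_+(x\triangleleft b)=\psi_+(x)\triangleleft b$, where the action sends the summand $X\otimes 1$ into $X\tp\A$, gives $j(x\triangleleft b)=0$ for all $x,b$; since $j$ is continuous and $X\triangleleft\A=X$ by essentiality, $j=0$. Hence $\psi_+=\psi_0$ takes values in $X\tp\A$, is an $\A$-morphism, and satisfies $\mathfrak m\psi_0=\mathfrak m_+\psi_+=\id_X$, so $\psi:=\psi_0$ is the desired splitting. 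The main point to get right is the admissibility of the action morphism, handled here through the unitization; the operator space bookkeeping (complete boundedness of $\Phi$ and $\tau$, and the direct-sum decomposition of $X\tp\A_+$) is routine.
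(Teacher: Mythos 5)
Your proof is correct, but note that the paper never actually proves Theorem \ref{33383}: it is invoked as a known classical result (citing \cite{Hel2}) transported to the quantized category, so there is no in-paper argument to compare against. What you wrote is precisely the standard Helemskii argument with the operator-space bookkeeping made explicit, and it fills the gap the paper leaves. Your sufficiency direction is the usual ``retract of an induced module'' computation, where $\Phi=\mathfrak{m}_Y\circ\bigl((\tau\rho)\otimes\id\bigr)$ is completely bounded because module actions are assumed completely bounded on $\tp$; as you rightly observe, this half needs neither essentiality nor any approximate identity. Your necessity direction is the classical unitization trick: $\mathfrak{m}$ itself need not be admissible, but $\mathfrak{m}_+:X\tp\A_+\ra X$ is, via $x\mapsto x\otimes 1$ (which is only a completely bounded right inverse, not a morphism --- exactly what admissibility asks). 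The two points your argument leans on, and which you correctly flag, are (i) that $X\tp\A_+$ decomposes completely isometrically as $(X\tp\A)\oplus_1 X$, which holds since $V\tp(\cdot)$ is the left adjoint of $\mathcal{CB}(V,\cdot)$ and therefore distributes over the coproduct $\oplus_1$, and (ii) that the $\A$-action sends the summand $X\otimes 1$ into $X\tp\A$, so comparing components in $\psi_+(x\triangleleft b)=\psi_+(x)\triangleleft b$ gives $j(x\triangleleft b)=0$, whence $j=0$ by essentiality (closed span of $X\triangleleft\A$ equals $X$) and continuity. Both steps are sound, so your write-up can stand as the proof the paper omits.
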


The case of left modules and bi-modules are analogous.

\section{Convolution and Pointwise Products for Locally Compact Quantum Groups}
In this section we define a quantum analogous of the convolution and pointwise
products for a locally compact quantum group, study the basic properties, and
state a formula linking them.

Let $\G$ be a locally compact quantum group, and $V \in \LLLL\vtp \LL$ its right
fundamental unitary. We can lift the co-products $\Gamma$ and $\hat\Gamma$ to
$\B(\LT)$, still using the same notation, as follows:
\[\begin{array}{lll}
\Gamma : \B(L^2(\G)) \rightarrow \B(L^2 (\G))\vtp \B(L^2 (\G)), && x \mapsto V (x\otimes 1) V^*;\\[1ex]
\hat\Gamma : \B(L^2(\G)) \rightarrow  \B(L^2(\G))\vtp \B(L^2 (\G)), && x \mapsto \hat V' (x\otimes 1)\hat V'^*.
\end{array}
\]
Then the preadjoint maps
$${\Gamma}_{\ast}, {\hat{\Gamma}}_{*} : \Tr\tp \Tr\rightarrow \Tr$$
define two different completely contractive products on the space of trace class operators $\Tr$. 
We denote these products by $\star$
and $\bullet$ respectively. We also denote by $\T_\star(\G)$ and $\T_\bullet(\G)$ 
the (quantized) Banach algebras $(\Tr,\star)$ and $(\Tr,\bullet)$, respectively.
If $\G=L^\infty(G)$ for a locally compact group $G$, then $\T_\star(\G)$ is
the convolution algebra introduced by Neufang in \cite{M-ths}.

Applied to the classical setting, i.e., the commutative
and co-commutative cases, the following lemma justifies
why the above products are considered as quantum versions
of convolution and point-wise products.

\begin{proposition} \label{211} The canonical quotient map $\pi:\T_{\star}(\G)\twoheadrightarrow L^1(\G)$
and the trace map $tr: \T_{\star}(\G)\rightarrow \C$ are
Banach algebra homomorphisms.
\end{proposition}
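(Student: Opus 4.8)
The plan is to read both maps off directly from the definition of $\star$ as the pre-adjoint of the lifted co-product $\Gamma(x)=V(x\otimes 1)V^*$ on $\B(\LT)$, and then to use two structural facts about $\Gamma$: its compatibility with the quantum group co-multiplication on $\LL$, and its unitality. First I would unwind the definitions. The quotient map $\pi$ is the pre-adjoint of the inclusion $\LL\hookrightarrow\B(\LT)$, so $\pi(\omega)=\omega|_{\LL}$ for $\omega\in\Tr$, while the trace map is $tr(\omega)=\langle\omega,1\rangle$, the pairing of $\omega$ with the identity operator $1\in\B(\LT)$. For $\omega,\tau\in\Tr$ and $x\in\B(\LT)$ the product is governed by $\langle\omega\star\tau,x\rangle=\langle\omega\otimes\tau,\Gamma(x)\rangle=\langle\omega\otimes\tau,V(x\otimes 1)V^*\rangle$.

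For $\pi$, the crucial point is that when $x\in\LL$ the lifted expression $V(x\otimes 1)V^*$ is exactly the quantum group co-multiplication and therefore lands in $\LL\vtp\LL$; this is precisely why that formula was chosen, so that the lifted $\Gamma$ restricts to the genuine co-product on $\LL$. Granting this, when I test $\omega\star\tau$ against $x\in\LL$ the functional $\omega\otimes\tau$ sees $\Gamma(x)$ only through its restriction to $\LL\vtp\LL$, whence $\langle\omega\otimes\tau,\Gamma(x)\rangle=\langle\pi(\omega)\otimes\pi(\tau),\Gamma(x)\rangle$. The right-hand side is by definition $\langle\pi(\omega)\star\pi(\tau),x\rangle$, the convolution product computed in $\LO$. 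Since this holds for all $x\in\LL$, I conclude $\pi(\omega\star\tau)=\pi(\omega)\star\pi(\tau)$.

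For $tr$, I would use that $V$ is unitary: $\Gamma(1)=V(1\otimes 1)V^*=VV^*=1\otimes 1$. Hence $tr(\omega\star\tau)=\langle\omega\star\tau,1\rangle=\langle\omega\otimes\tau,1\otimes 1\rangle=\langle\omega,1\rangle\,\langle\tau,1\rangle=tr(\omega)\,tr(\tau)$, which is the multiplicativity asserted.

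The computations themselves are short, so the only point demanding genuine care, and the one I would record explicitly, is the compatibility step in the argument for $\pi$: that the co-product lifted to $\B(\LT)$ carries $\LL$ into $\LL\vtp\LL$ and agrees there with the original co-multiplication. This is exactly the statement that $\Gamma(x)=V(x\otimes 1)V^*$ is the defining formula of the co-product, so it follows at once from the quantum group axioms recalled in Section 2; no further work on the $\bullet$-product is needed for this proposition.
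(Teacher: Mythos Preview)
Your proof is correct and follows exactly the same route as the paper: multiplicativity of $\pi$ is deduced from the fact that the lifted $\Gamma$ restricts to the original co-multiplication on $\LL$ (so $\Gamma(\LL)\subseteq\LL\vtp\LL$), and multiplicativity of $tr$ from $\Gamma(1)=1\otimes 1$. The paper's proof is a one-line sketch citing precisely these two facts, and you have simply unpacked the details.
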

\begin{proof}
First part follows from the fact that $\Gamma(\LL)\subseteq\LL\vtp\LL$, and the 
second part is an easy consequence of the identity $\Gamma(1) = 1\ot 1$.
\end{proof}
The above Proposition allows us to define (right) $\T_\star(\G)$-module
structures on $L^1(\G)$ and $\C$ as follows:
\[\label{859476}
f\triangleleft\rho = f\star\pi(\rho) \ \ \  \text{and} 
\ \ \ \lambda\triangleleft\rho = \lambda tr(\rho),
\]
where $\rho\in\T_\star(\G)$, $f\in\LO$, and $\lambda\in\C$.
We will show later that some of the topological
properties of $\G$ can be deduced from these module structures.

But, first we prove some properties 
of the lifted co-products and their induced products.
\begin{proposition}\label{212}
Let $x\in \B(\LT)$. If $\Gamma (x)$ = $y\otimes 1$ for some $y\in \B(\LT)$ then
we have $x = y\in \LLL$.
\end{proposition}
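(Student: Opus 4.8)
The plan is to establish the two assertions separately: first that $x=y$, and then that this common operator lies in $\LLL$. Both will follow by slicing the hypothesis $\Gamma(x)=V(x\otimes 1)V^*=y\otimes 1$ against normal functionals, combined with co-associativity of the lifted co-product.

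First I would rewrite the hypothesis as the intertwining relation
\[
V(x\otimes 1)=(y\otimes 1)V
\]
in $\B(\LT)\vtp\B(\LT)$. Slicing the second leg against an arbitrary $f\in\LO$, and using the elementary module identities for slice maps, namely $(\iota\otimes f)\big(T(z\otimes 1)\big)=(\iota\otimes f)(T)\,z$ and $(\iota\otimes f)\big((z\otimes 1)T\big)=z\,(\iota\otimes f)(T)$, together with $\rho(f)=(\iota\otimes f)(V)$, yields
\[
\rho(f)\,x=y\,\rho(f)\qquad(f\in\LO).
\]

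To show $x=y$ I would invoke co-associativity of the lifted co-product, $(\Gamma\otimes\iota)\,\Gamma=(\iota\otimes\Gamma)\,\Gamma$ on $\B(\LT)$, which holds by the pentagonal relation \eqref{F.pentagonal} exactly as for the co-product on $\LL$. Applying both composites to $x$ and using $\Gamma(x)=y\otimes 1$ together with $\Gamma(1)=1\otimes 1$, I obtain
\[
\Gamma(y)\otimes 1=(\Gamma\otimes\iota)(y\otimes 1)=(\iota\otimes\Gamma)(y\otimes 1)=y\otimes 1\otimes 1 ;
\]
slicing off the third leg against a normal state leaves $\Gamma(y)=y\otimes 1=\Gamma(x)$. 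Since $\Gamma(\cdot)=V(\cdot\otimes 1)V^*$ is injective ($V$ being unitary), this forces $x=y$. Substituting $y=x$ into the sliced relation above gives $\rho(f)\,x=x\,\rho(f)$ for every $f\in\LO$, so $x$ commutes with $\LLLL=\{\rho(f):f\in\LO\}''$. By the standard commutation relation $\LLLL=\LLL'$ between the two dual von Neumann algebras, it follows that $x\in\LLLL'=\LLL$, completing the argument.

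The step I expect to demand the most care is the co-associativity of the \emph{lifted} co-product: while co-associativity on $\LL$ is part of the definition of $\G$, here it must be verified on all of $\B(\LT)$, and this is exactly where the pentagonal relation for $V$ enters (one computes $(\Gamma\otimes\iota)(V)=V_{12}V_{13}V_{12}^*$ and reduces the two iterated co-products of $x$ to a common expression via $V_{12}V_{13}=V_{23}V_{12}V_{23}^*$, using that $x\otimes 1\otimes 1$ commutes with $V_{23}$). Everything else reduces to routine slicing and the bicommutant theorem.
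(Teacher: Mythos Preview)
Your proof is correct, but you take a noticeably longer detour than the paper to establish $x=y$. The paper arrives at the same intertwining relation $\hat a'\,x = y\,\hat a'$ for all $\hat a'$ in the weak$^*$-dense set $\{(\iota\otimes\omega)V:\omega\in\Tr\}$, then observes that both sides are separately weak$^*$-continuous in $\hat a'$, so the relation extends to all of $\LLLL$. Since $1\in\LLLL$, plugging in $\hat a'=1$ gives $x=y$ in one line, and the commutation $\hat a'x=x\hat a'$ then yields $x\in\LLLL'=\LLL$ exactly as you do. In particular, the co-associativity of the \emph{lifted} co-product --- which you correctly verify via the pentagonal relation and correctly flag as the most delicate point of your argument --- is never needed. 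Your route does buy something, namely an independent confirmation that the lifted $\Gamma$ is co-associative on all of $\B(\LT)$, which is a useful fact in its own right; but for the present proposition the paper's trick of specializing $\hat a'=1$ is both shorter and avoids that machinery entirely.
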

\begin{proof} We have
\begin{equation}\label{1039}
((\iota\otimes\omega)V)x = (\iota\otimes\omega)(V(x\otimes 1)) =
(\iota\otimes\omega)((y\otimes 1)V) = y((\iota\otimes\omega)V)
\end{equation}
for all $\omega\in \Tr$. Since 
$$\LLLL = \overline{\{(\iota\otimes \omega)V: \omega\in\Tr\}}^{w^*},$$
it follows from (\ref{1039}) that 
$${\hat a}'x = y{\hat a}'$$
 for all ${\hat a}'\in \LLLL$. 
In particular for ${\hat a}' = 1$, it follows that $x = y$,
and since $\hat a' x = x\hat a'$ for all $\hat a'\in\LLLL$, we have $x\in\LLL$.
\end{proof}
\begin{lemma}\label{213}
Let $x\in \B(\LT)$. If $\Gamma (x) = 1\otimes y$, for some $y\in \B(\LT)$, then $x\in \LLLL$ and $y\in \LL$.
\end{lemma}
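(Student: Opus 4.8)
The plan is to mirror the proof of Proposition \ref{212}, but now to slice $V$ in \emph{both} tensor legs, because the hypothesis $\Gamma(x)=1\otimes y$, i.e.
\[
V(x\otimes 1) = (1\otimes y)\,V,
\]
is no longer symmetric in the two variables: the ``$1$'' sits in the right leg of $x\otimes 1$ but in the left leg of $1\otimes y$. Accordingly I would extract the two conclusions $y\in\LL$ and $x\in\LLLL$ by two separate (but symmetric) slicing computations, using throughout that for $\omega\in\Tr$ the right slices $(\iota\otimes\omega)(V)$ lie in $\LLLL$ and the left slices $(\omega\otimes\iota)(V)$ lie in $\LL$, which is immediate from $V\in\LLLL\vtp\LL$.

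First, to obtain $y\in\LL$, I would apply the left slice $(\omega\otimes\iota)$ to the displayed identity. Writing $x\omega\in\Tr$ for the functional $a\mapsto\omega(ax)$, a direct computation gives
\[
\big((x\omega)\otimes\iota\big)(V) = y\,\big((\omega\otimes\iota)(V)\big) \qquad(\omega\in\Tr).
\]
The left-hand side is a left slice of $V$, hence lies in $\LL$; thus $y\,a\in\LL$ for every $a$ in the set $\{(\omega\otimes\iota)(V):\omega\in\Tr\}$. This set is $w^*$-dense in $\LL$ (it contains the generators $(\hat f'\otimes\iota)(V)$ of $C_0(\G)$, whose $w^*$-closure is $\LL$), and since left multiplication by the fixed operator $y$ is $w^*$-$w^*$ continuous while $\LL$ is $w^*$-closed, it follows that $y\,\LL\subseteq\LL$; choosing a net $(\omega_i)$ with $(\omega_i\otimes\iota)(V)\to 1$ weak$^*$ then yields $y=y\cdot 1\in\LL$.

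The conclusion $x\in\LLLL$ is obtained by the symmetric argument, applying instead the right slice $(\iota\otimes\omega)$. Writing $\omega y\in\Tr$ for the functional $b\mapsto\omega(yb)$, one finds
\[
\big((\iota\otimes\omega)(V)\big)\,x = \big(\iota\otimes(\omega y)\big)(V) \qquad(\omega\in\Tr),
\]
whose right-hand side is a right slice of $V$ and hence lies in $\LLLL$. As already used in Proposition \ref{212}, the right slices $\{(\iota\otimes\omega)(V):\omega\in\Tr\}$ are $w^*$-dense in $\LLLL$, so $a\,x\in\LLLL$ for a $w^*$-dense set of $a\in\LLLL$; the $w^*$-continuity of right multiplication by $x$ together with the $w^*$-closedness of $\LLLL$ then gives $\LLLL\,x\subseteq\LLLL$, and passing to the limit $a\to 1$ yields $x\in\LLLL$.

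I expect the only real care to be needed in the bookkeeping of the two slice maps on the mixed-leg product $V(x\otimes 1)=(1\otimes y)V$ — in particular, correctly identifying each side as $(\,\cdot\,\otimes\iota)(V)$ or $(\iota\otimes\,\cdot\,)(V)$ for a suitably perturbed functional, so that one genuinely lands back in a slice of $V$ — and in recording that $1$ lies in the $w^*$-closures $\overline{\{(\omega\otimes\iota)(V)\}}^{\,w^*}=\LL$ and $\overline{\{(\iota\otimes\omega)(V)\}}^{\,w^*}=\LLLL$, which legitimizes substituting the unit at the end. There is no deeper obstruction: once the two slicings are set up correctly, each conclusion drops out exactly as $x=y\in\LLL$ did in Proposition \ref{212}.
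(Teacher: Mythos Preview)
Your slicing argument is correct: both displayed identities are computed accurately, the weak$^*$-density claims are exactly those used (or implicit) in the paper, and the passage to the limit $a\to 1$ is legitimate since $1$ lies in both $\LL$ and $\LLLL$.

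However, the paper's own proof is considerably shorter and does not slice at all. It simply observes that $V\in\LLLL\vtp\LL$, so $1\otimes y = V(x\otimes 1)V^*$ is a product of three elements of $\B(\LT)\vtp\LL$ and hence itself lies in $\B(\LT)\vtp\LL$, giving $y\in\LL$; and symmetrically $x\otimes 1 = V^*(1\otimes y)V$ is a product of three elements of $\LLLL\vtp\LL$, so $x\in\LLLL$. In other words, the paper exploits directly that the von Neumann tensor product is closed under multiplication, bypassing the slice-and-density machinery entirely. Your approach buys nothing extra here---it reproves, via slices, a containment that follows immediately from $V\in\LLLL\vtp\LL$---but it does have the pedagogical virtue of making visible the parallel with Proposition~\ref{212}, where slicing is genuinely needed because the conclusion ($x=y$) is not a mere membership statement.
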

\begin{proof} 
Since $(1\otimes y) = V(x\otimes 1)V^*\in \B(\LT)\vtp\LL$, we have $y\in \LL$.
Similarly, $x\otimes 1 = V^*(1\otimes y)V\in\LLLL\vtp\LL$ implies that $x\in\LLLL$.
\end{proof}
\begin{corollary}\label{214}
Let $x\in \B(\LT)$. If $\Gamma (x) = 1\otimes x$, then $x\in \C 1$.
\end{corollary}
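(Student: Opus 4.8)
The plan is to reduce the statement to the preceding lemma together with a triviality statement for an intersection of von Neumann algebras. Applying Lemma~\ref{213} to the hypothesis $\Gamma(x) = 1\otimes x$ (that is, taking $y = x$ there) immediately yields that $x\in\LLLL$ and simultaneously $x\in\LL$. Hence it suffices to prove
\[
\LL\cap\LLLL = \C 1 .
\]
Recalling the commutant identification $\LLLL = (\LLL)'$ for the dual commutant quantum group, this amounts to showing $\LL\cap(\LLL)' = \C 1$: an element of $\LL$ that moreover commutes with the whole dual algebra $\LLL$ must be scalar.

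To establish this I would exploit the relation $\Gamma(x) = W^*(1\otimes x)W = 1\otimes x$, valid here since $x\in\LL$, which says precisely that $1\otimes x$ commutes with the left fundamental unitary $W\in\LL\vtp\LLL$. Slicing this commutation relation on the first leg by an arbitrary $\theta\in\Tr$ only recovers that $x$ commutes with $\LLL$, information we already have; the useful content comes from slicing on the second leg, where, using $x\in(\LLL)'$, one compares $(\iota\otimes\omega_x)(W)$ with $(\iota\otimes{}_x\omega)(W)$ for all $\omega\in\Tr$ and plays this against the density of $\{(\iota\otimes\omega)(W):\omega\}$ in $\CZ$. More cleanly, I would simply invoke the standard structural fact that $\LL$ and $\LLL$ sit in general position, so that $\LL\cap(\LLL)' = \C 1$; in the commutative case $\G=\G_a$ this is the elementary observation that a multiplication operator $M_f$ commuting with every left translation forces $f$ to be constant.

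The main obstacle is proving the triviality of the intersection in full generality rather than in special cases. The naive classical argument---``set the second variable equal to the identity'', i.e.\ apply the counit via $(\iota\otimes\epsilon)\Gamma = \iota$ to obtain $x = \epsilon(x)1$---does not survive the passage to arbitrary locally compact quantum groups, since the counit need not be normal or even bounded unless $\G$ is co-amenable; similarly, the bi-invariant-state argument available for compact $\G$ (apply $h\otimes\iota$ to $\Gamma(x)=1\otimes x$) relies on boundedness of the Haar weight. The remaining ingredients---the slice-map identities and the identification $\LLLL = (\LLL)'$---are routine and standard.
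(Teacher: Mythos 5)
Your proposal is correct and takes essentially the same route as the paper: the paper's proof likewise applies Lemma \ref{213} with $y=x$ to get $x\in\LL\cap\LLLL$ and then simply asserts that this intersection equals $\C 1$. Note that the paper gives no more justification for the triviality of $\LL\cap\LLLL$ than you ultimately do — it treats it as a known structural fact about locally compact quantum groups — so your fallback to citing that standard fact (rather than the inconclusive slicing sketch) is exactly the paper's own level of argument.
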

\begin{proof}  If $\Gamma (x)$ = $1\otimes x$, the Lemma \ref{213} implies that
$x\in \LL \cap \LLLL$ which equals $\C 1$.
\end{proof}

Now we investigate the relation between these two products
on $\Tr$, and find a formula (\ref{formmula}) linking them.
\begin{proposition}\label{224}
For $\rho$, $\xi$ and $\eta$ in $\Tr$, the following two relations hold:
\begin{eqnarray*}
\rho \star (\xi\bullet \eta)  &=& \la\eta,1\rangle\rho \star\xi;\\
\rho \bullet (\xi \star \eta) &=& \la\eta,1\rangle\rho \bullet\xi.
\end{eqnarray*}
\end{proposition}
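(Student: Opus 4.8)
The plan is to peel off the definitions of the two products and reduce the whole statement to the way each lifted co-multiplication acts on the range of the other. Since $\star$ and $\bullet$ are by definition the pre-adjoints of $\Gamma$ and $\hat\Gamma$, for an arbitrary $x\in\B(\LT)$ I would expand the inner $\bullet=\hat\Gamma_*$ on the second leg, using the separate weak$^*$-continuity and associativity of the slice maps, to obtain
\begin{equation*}
\langle x,\rho\star(\xi\bullet\eta)\rangle = \langle\Gamma(x),\rho\ot(\xi\bullet\eta)\rangle = \langle(\iota\ot\hat\Gamma)\Gamma(x),\rho\ot\xi\ot\eta\rangle .
\end{equation*}
Symmetrically, the second identity reduces to a pairing of $\rho\ot\xi\ot\eta$ against $(\iota\ot\Gamma)\hat\Gamma(x)$. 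Thus everything comes down to computing $(\iota\ot\hat\Gamma)\Gamma(x)$ and $(\iota\ot\Gamma)\hat\Gamma(x)$.

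The key step is then a pair of \emph{absorption} identities for the lifted co-products, namely $\hat\Gamma(y)=y\ot 1$ for $y\in\LL$ and $\Gamma(z)=z\ot 1$ for $z\in\LLL$. The second of these is precisely the converse of Proposition \ref{212}: since $\LLL=(\LLLL)'$, an element $z\in\LLL$ has $z\ot 1$ commuting with $V\in\LLLL\vtp\LL$, whence $\Gamma(z)=V(z\ot 1)V^*=z\ot 1$; the first is the mirror statement for $\hat\Gamma$, obtained because the left leg of $\hat V'$ lies in the commutant $\LL'$ and so commutes with $y\ot 1$. I would combine these with the two range restrictions $\Gamma(\B(\LT))\subseteq\B(\LT)\vtp\LL$ and $\hat\Gamma(\B(\LT))\subseteq\B(\LT)\vtp\LLL$, which follow from $V\in\B(\LT)\vtp\LL$ and the corresponding placement of $\hat V'$. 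Because the second leg of $\Gamma(x)$ always lies in $\LL$, applying $\iota\ot\hat\Gamma$ to it merely appends a trivial third leg, so that $(\iota\ot\hat\Gamma)\Gamma(x)=\Gamma(x)\ot 1$; dually $(\iota\ot\Gamma)\hat\Gamma(x)=\hat\Gamma(x)\ot 1$.

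Putting this together, the third tensor factor in each pairing is the identity operator, so slicing against $\eta$ simply produces $\langle\eta,1\rangle=tr(\eta)$ and leaves $\langle\Gamma(x),\rho\ot\xi\rangle=\langle x,\rho\star\xi\rangle$ (respectively $\langle x,\rho\bullet\xi\rangle$) behind; since $x$ is arbitrary, the two claimed formulas follow. The main obstacle I anticipate is not the formal slicing but establishing the absorption identities cleanly — that is, tracking the leg placements and the mutual commutation of the von Neumann algebras $\LL,\LLL,\LLLL$ and the two fundamental unitaries precisely enough to be certain that $\hat\Gamma$ is trivial exactly on $\LL$ while carrying $\B(\LT)$ into $\B(\LT)\vtp\LLL$, together with the dual assertions for $\Gamma$. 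Once these structural facts are in hand, the evaluation of $(\iota\ot\hat\Gamma)\Gamma(x)$ and $(\iota\ot\Gamma)\hat\Gamma(x)$, and hence the proposition, is a formal consequence.
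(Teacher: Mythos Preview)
Your argument is correct and rests on the same structural facts as the paper's proof --- the absorption identities $\hat\Gamma|_{\LL}=(\cdot)\ot 1$ and $\Gamma|_{\LLL}=(\cdot)\ot 1$, together with the range inclusions $\Gamma(\B(\LT))\subseteq\B(\LT)\vtp\LL$ and its dual. The only difference in packaging is that the paper tests the identity against elements of the form $x\hat x$ with $x\in\LL$, $\hat x\in\LLL$, computes $(\iota\ot\hat\Gamma)\Gamma(x\hat x)=(\Gamma(x)\ot 1)(\hat x\ot 1\ot 1)$ by splitting the product, and then invokes the weak$^*$ density of the linear span of such products in $\B(\LT)$; you instead use the range inclusion to obtain $(\iota\ot\hat\Gamma)\Gamma(x)=\Gamma(x)\ot 1$ directly for every $x\in\B(\LT)$, which bypasses the density step altogether. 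Your route is marginally cleaner for that reason, while the paper's version has the advantage of making the role of the two commuting subalgebras $\LL$ and $\LLL$ completely explicit.
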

\begin{proof} Let $x\in \LL$ and $\hat x\in \LLL$. Then we have
\begin{eqnarray*}
\la\rho \star (\xi\bullet \eta),x\hat x\rangle &=&
 \langle\rho \otimes (\xi\bullet \eta),\Gamma(x\hat x)\rangle
\\ &=& \la\rho \otimes \xi\otimes \eta,(\iota\otimes {\hat\Gamma})(\Gamma (x\hat x))\rangle   \\
&=& \la\rho \otimes \xi\otimes \eta,(\Gamma(x)\otimes 1)(\hat x\otimes 1\otimes 1)\rangle\\ 
&=& \la\eta,1\rangle\la\rho \otimes \xi,\Gamma(x)(\hat x\otimes 1)\rangle\\
&=& \la\eta,1\rangle\la\rho \star\xi,x\hat x\rangle,
\end{eqnarray*}
which, by weak$^*$ density of the span of the
set $\{x\hat x : x\in\LL , \hat x \in\LLL\}$ in $\B(\LT)$,  implies the first formula.
The second relation follows along similar lines.
\end{proof}

Since there are two different multiplications on $\Tr$ arising from 
$\G$ and $\hat\G$, it is tempting to
consider the corresponding two actions at the same time 
by defining a bi-module structure on $\Tr$, using these two products. 
But one can deduce from the above proposition, that multiplication from the left and right via these products,
is not associative, and so we cannot turn $\Tr$ 
into a $\T_\star(\G)-\T_\bullet(\G)$ bimodule in this fashion. However,
next theorem will provide us with a way of doing so.
\begin{theorem}\label{223}
For $\rho$, $\xi$ and $\eta$ in $\Tr$, the following relation holds:
\begin{equation}\label{formmula}
(\rho \star \xi)\bullet \eta \ = \ (\rho\bullet\eta)\star\xi.
\end{equation}
Equivalently denoting by $\mathfrak{m}$ and $\hat{\mathfrak{m}}$
the product maps corresponding to $\star$ and $\bullet$, respectively,
we have
\[
\mathfrak{m}\circ(\hat{\mathfrak{m}}\ot\id) = 
\hat{\mathfrak{m}}\circ(\mathfrak{m}\ot\id)\circ(\id\ot\sigma)
\]
on the triple (operator space) projective tensor product of $\Tr$ with itself; here $\sigma$ is the flip map.
\end{theorem}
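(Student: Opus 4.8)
The plan is to prove the identity $(\rho\star\xi)\bullet\eta = (\rho\bullet\eta)\star\xi$ by pairing both sides against a weak$^*$-dense family of elements in $\B(\LT)$, exactly as in the proof of Proposition~\ref{224}. The natural test family is again the span of products $x\hat x$ with $x\in\LL$ and $\hat x\in\LLL$, whose weak$^*$ density was already invoked there. So I would fix such $x,\hat x$, unfold the left-hand side using the definitions of $\star$ (via $\Gamma$) and $\bullet$ (via $\hat\Gamma$), unfold the right-hand side the same way, and verify that both reduce to the same pairing of $\rho\otimes\xi\otimes\eta$ against a single operator on $\LT\vtp\LT\vtp\LT$.

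**The key computation.** For the left-hand side, $\langle(\rho\star\xi)\bullet\eta, x\hat x\rangle = \langle(\rho\star\xi)\otimes\eta, \hat\Gamma(x\hat x)\rangle$, and then I would apply $\star$ to get a triple pairing $\langle\rho\otimes\xi\otimes\eta,\, (\iota\otimes\Gamma\otimes\iota)(\hat\Gamma(x\hat x))\rangle$ after suitable reindexing of the legs. Here the crucial structural input is the compatibility of the two lifted co-products $\Gamma(z)=V(z\otimes1)V^*$ and $\hat\Gamma(z)=\hat V'(z\otimes1)\hat V'^*$ on $\B(\LT)$. On the classical generators one expects $\Gamma$ to act multiplicatively splitting $x$ while leaving $\hat x$ as a scalar-type leg, and $\hat\Gamma$ to do the mirror-image thing on $\hat x$; concretely $\hat\Gamma(\hat x)$ sits in $\LLL\vtp\LLL$ while $\hat\Gamma(x)=x\otimes1$ type behavior holds because $x\in\LL$ commutes appropriately through $\hat V'$. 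The right-hand side $\langle(\rho\bullet\eta)\star\xi, x\hat x\rangle$ unfolds symmetrically, with the roles of $\Gamma$ and $\hat\Gamma$ interchanged, and the two resulting operators agree precisely because $V$ and $\hat V'$ implement commuting co-products (the legs acted on by $\Gamma$ and by $\hat\Gamma$ are disjoint, so the two operations can be performed in either order). Once the scalar pairings coincide for all $x\hat x$, weak$^*$ density gives \eqref{formmula}.

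**The main obstacle** I anticipate is keeping the leg numbering honest: both $\star$ and $\bullet$ are binary operations, so iterating them produces a genuine triple tensor product $\Tr\tp\Tr\tp\Tr$, and one must track carefully which factor of $\rho\otimes\xi\otimes\eta$ each co-product leg pairs with, including the flip $\sigma$ that appears in the operator-space formulation. The substantive point, rather than the bookkeeping, is the commutation of the two liftings $\Gamma$ and $\hat\Gamma$; I would isolate this as the algebraic heart of the argument, namely that $(\iota\otimes\Gamma)\circ\hat\Gamma$ and the analogous composite with the factors swapped yield the same element of $\B(\LT)^{\vtp 3}$. This is ultimately a consequence of the relations between $V$, $W$, and $\hat V'$ encoded in the pentagonal relation \eqref{F.pentagonal} together with $\Gamma(z)=W^*(1\otimes z)W=V(z\otimes1)V^*$.

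**Finally**, for the equivalent operator-space reformulation, I would observe that the pointwise identity $(\rho\star\xi)\bullet\eta=(\rho\bullet\eta)\star\xi$ is exactly the statement $\hat{\mathfrak{m}}\circ(\mathfrak{m}\otimes\id) = \mathfrak{m}\circ(\hat{\mathfrak{m}}\otimes\id)\circ(\id\otimes\sigma)$ evaluated on the elementary tensor $\rho\otimes\xi\otimes\eta$; since such tensors are dense in $\Tr\tp\Tr\tp\Tr$ and all four maps $\mathfrak{m}$, $\hat{\mathfrak{m}}$, $\sigma$, $\id$ are completely contractive (hence the two composites are continuous), the identity extends from elementary tensors to the whole projective tensor product by linearity and continuity.
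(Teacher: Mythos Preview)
Your approach is correct and essentially identical to the paper's: pair both sides against $x\hat x$ with $x\in\LL$, $\hat x\in\LLL$, use that $\hat\Gamma(x)=x\otimes1$ and $\Gamma(\hat y)=\hat y\otimes1$ (so that $(\Gamma\otimes\iota)\hat\Gamma(x\hat x)=(\Gamma(x)\otimes1)\hat\Gamma(\hat x)_{13}$), and conclude by weak$^*$ density. One small remark: the pentagonal relation is not actually needed here---the key commutation facts follow directly from $V\in\LLLL\vtp\LL$ and the analogous containment for $\hat V'$, so the ``algebraic heart'' is simpler than you suggest.
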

\begin{remark}
This theorem shows that the dual products on quantum groups ``anti-commute'':
the minus sign of a usual anti-commutation relation in an algebra (with respect to a given product)
is replaced by the flip map when comparing two different products.
\end{remark}
\begin{proof} Let $x\in \LL$ and $\hat x\in \LLL$. Then we have:
\begin{eqnarray*}
\la(\rho \star \xi)\bullet \eta,x\hat x\rangle  &=&
\la(\rho \star \xi)\otimes \eta,{\hat\Gamma}(x\hat x)\rangle
 \\ %&=&  \la(\rho \star \xi)\otimes \eta,(x\otimes 1){\hat\Gamma}(\hat x)\rangle\\
& =&  \la\rho \otimes \xi\otimes \eta,(\Gamma \otimes \iota)[(x\otimes 1){\hat\Gamma}(\hat x)]\rangle
 \\ &=&  \la\rho \otimes \xi\otimes \eta,(\Gamma (x)\otimes 1){{\hat\Gamma}(\hat x)}_{13}\rangle\\
 &=&  \la\rho \otimes \eta\otimes \xi,{\Gamma (x)}_{13}({\hat\Gamma}(\hat x)\otimes 1)\rangle
 \\ &=&  \la\rho \otimes \eta\otimes \xi,({\hat\Gamma} \otimes \iota)[\Gamma(x)(\hat x\otimes 1)]\rangle\\
& =&  \la(\rho \bullet \eta)\otimes \xi,\Gamma(x\hat x)\rangle
 \\ &=&  \la(\rho \bullet \eta)\star \xi,x\hat x\rangle.
\end{eqnarray*}
Hence, theorem follows, again  by weak$^*$ density of the span of the
set $$\{x\hat x : x\in\LL , \hat x \in\LLL\}\subseteq \B(\LT).$$
\end{proof}
Theorem \ref{223} has even more significance:
the quantum group duality may be encoded  by this relation. 
In fact, one might be able to start from this relation on trace class operators
 on a Hilbert space, with some extra
conditions, to arrive to an equivalent axiomatic definition for locally compact quantum groups.
We intend to address this project in a subsequent paper.
\begin{proposition} \label{225} The space $\Tr$ becomes a $\T_\star(\G)^{op}-\T_\bullet(\G)$ 
bimodule via the actions
$$\eta\triangleright\rho = \rho\star\eta \  \ \  \ \text{and} 
\ \ \   \ \rho\triangleleft\xi = \rho\bullet\xi,$$
where $\rho \in \Tr$, $\eta \in \T_\star (\G)$ and $\xi\in \T_\bullet(\G)$.
\end{proposition}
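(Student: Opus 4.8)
The plan is to verify the three defining axioms of a bimodule in turn, observing that each one reduces to a structural fact already established. First I would check that $\eta\triangleright\rho = \rho\star\eta$ is a left action of the opposite algebra $\T_\star(\G)^{op}$. Writing $\cdot$ for the product of $\T_\star(\G)^{op}$, so that $\eta_1\cdot\eta_2 = \eta_2\star\eta_1$, the axiom $(\eta_1\cdot\eta_2)\triangleright\rho = \eta_1\triangleright(\eta_2\triangleright\rho)$ unwinds to
\[
\rho\star(\eta_2\star\eta_1) = (\rho\star\eta_2)\star\eta_1,
\]
which is nothing but associativity of $\star$; passing to the opposite algebra on the left is exactly what compensates for the action being \emph{right} multiplication by $\eta$. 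Next I would verify that $\rho\triangleleft\xi = \rho\bullet\xi$ is a right action of $\T_\bullet(\G)$: here the axiom $(\rho\triangleleft\xi_1)\triangleleft\xi_2 = \rho\triangleleft(\xi_1\bullet\xi_2)$ is just associativity of $\bullet$. Both of these are immediate.

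The only substantive step is the compatibility of the two actions, namely that $(\eta\triangleright\rho)\triangleleft\xi = \eta\triangleright(\rho\triangleleft\xi)$. Expanding both sides, the left-hand side is $(\rho\star\eta)\bullet\xi$ and the right-hand side is $(\rho\bullet\xi)\star\eta$, so the required identity is
\[
(\rho\star\eta)\bullet\xi = (\rho\bullet\xi)\star\eta,
\]
which is precisely formula (\ref{formmula}) of Theorem \ref{223} after interchanging the roles of the two variables $\xi$ and $\eta$. This is the heart of the matter, and it is exactly the point flagged in the discussion preceding the statement: the naive attempt to combine $\star$ and $\bullet$ into a two-sided action fails because of the non-associativity recorded in Proposition \ref{224}, and Theorem \ref{223} is the precise correction that rescues it. In other words, the ``anti-commutation'' of the dual products, mediated by the flip map, is exactly the content of the bimodule compatibility law in the present configuration.

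Finally, I would record that this is genuinely a module in the quantized (operator space) category: since $\star$ and $\bullet$ are completely contractive products on $\Tr$, each of the actions $\triangleright$ and $\triangleleft$ is completely bounded, so no further estimates are needed. I do not anticipate a genuine obstacle here — all the content lies in recognising the compatibility axiom as the already-proved relation (\ref{formmula}), while the remaining two module axioms are formal consequences of associativity of the respective products together with the choice of the opposite algebra on the left.
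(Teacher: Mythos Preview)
Your proof is correct and follows the same approach as the paper: the paper's proof simply states that only the compatibility of the left and right actions needs checking and then verifies $(\eta\triangleright\rho)\triangleleft\xi = (\rho\star\eta)\bullet\xi = (\rho\bullet\xi)\star\eta = \eta\triangleright(\rho\triangleleft\xi)$ via Theorem~\ref{223}, which is exactly your ``only substantive step.'' Your write-up is just more explicit about the routine left/right module axioms and the completely bounded nature of the actions, neither of which the paper bothers to spell out.
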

\begin{proof} We only need to check the associativity of
the left-right action. For this, using Theorem \ref{223}, we obtain
\[
(\eta\triangleright\rho)\triangleleft\xi = 
(\rho\star\eta)\bullet\xi 
= (\rho\bullet\xi)\star\eta
= \eta\triangleright(\rho\triangleleft\xi).
\]
\end{proof}

The following proposition is known and has been stated in many different places.
\begin{proposition}\label{215} 
Let $\G$ be a locally compact quantum group. Then the following hold:
 \begin{itemize}
  \item[(1)] $\LO$ has a left (right) identity if and only if $\G$ is discrete.
\item[(2)] $\LO$ has a bounded left (right) approximate identity if and only if $\G$ is co-amenable.
 \end{itemize}
\end{proposition}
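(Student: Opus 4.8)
The plan is to treat the two statements separately, noting that (2) is essentially a restatement of the definition. By the notion of co-amenability recalled in Section 2, together with the equivalence of left, right, and two-sided bounded approximate identities established in \cite[Theorem 3.1]{B-T}, the algebra $\LO$ has a bounded left (equivalently right) approximate identity precisely when $\G$ is co-amenable; so (2) requires no further argument beyond invoking that equivalence. All the content is in (1), and my plan there is to reduce the existence of a one-sided \emph{identity} in $\LO$ to the statement that the counit of $\G$ is normal, and then to invoke the standard structural fact that normality of the counit characterizes discreteness.

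For the forward direction of (1), suppose $e\in\LO$ is a left identity, so $e\star f=f$ for all $f\in\LO$. First I would push this through the left regular representation $\lambda$: since $\lambda$ is an algebra homomorphism, $\lambda(e)\lambda(f)=\lambda(e\star f)=\lambda(f)$ for every $f$. As $\lambda(\LO)$ is weak$^*$ dense in $\LLL$ and multiplication on $\LLL$ is separately weak$^*$ continuous, $\lambda(e)$ must be a left identity for the unital von Neumann algebra $\LLL$, whence $\lambda(e)=1$, i.e. $(e\otimes\iota)(W)=1$. Next I would slice the second leg: for $\hat\omega\in L^1(\hat\G)$ this gives $e\big((\iota\otimes\hat\omega)(W)\big)=\hat\omega(1)$. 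Since the elements $(\iota\otimes\hat\omega)(W)$ generate $\CZ$ and the counit $\epsilon$ of $\G$ is characterized on exactly these generators by the defining relation $(\epsilon\otimes\iota)(W)=1$, the normal functional $e$ agrees with $\epsilon$ on $\CZ$. In other words, the counit extends to a normal functional on $\linfqg$. The right-identity case is identical: a right identity pushes under $\lambda$ to a right identity of $\LLL$, again forcing $\lambda(e)=1$ (alternatively one works in the second leg using $\rho$), and the same slicing argument applies.

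Conversely, when $\G$ is discrete its counit $\epsilon$ is already normal, so $\epsilon\in\LO$; the counit relations $(\epsilon\otimes\iota)\Gamma=\iota=(\iota\otimes\epsilon)\Gamma$ dualize to $\epsilon\star f=f=f\star\epsilon$, exhibiting $\epsilon$ as a two-sided, hence one-sided, identity for $\LO$. The step I expect to be the main obstacle is precisely the equivalence between normality of the counit and discreteness of $\G$: in one direction this rests on the structure theorem identifying $\linfqg$ for discrete $\G$ with an $\ell^\infty$-direct sum of matrix algebras, on which $\epsilon$ is the normal vector functional supported on the trivial block; in the other direction one must rule out a normal counit whenever $\G$ is non-discrete. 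I would quote this equivalence from the structure theory of discrete quantum groups rather than reprove it, since it is exactly the ``known'' ingredient the proposition alludes to—the reductions via $\lambda$ and the slice maps being purely formal.
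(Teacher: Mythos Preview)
The paper does not prove this proposition at all: it is stated as a known fact (``has been stated in many different places'') with no argument supplied, so there is no proof in the paper to compare against. Your treatment of (2) as a restatement of the definition, together with the left/right/two-sided equivalence from \cite[Theorem~3.1]{B-T}, is exactly how the paper itself introduces co-amenability in Section~2.

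Your argument for (1) is correct and lands on the right invariant: a one-sided identity $e\in\LO$ is a normal counit, and normality of the counit characterizes discreteness (this is indeed in the literature, e.g.\ \cite{Run}). One small expository point: the detour through $\lambda$ and $\CZ$ is unnecessary and the phrasing is slightly circular, since a bounded counit on the \emph{reduced} algebra $\CZ$ is only guaranteed to exist when $\G$ is co-amenable---so speaking of ``the counit $\epsilon$'' on $\CZ$ and then checking that $e$ agrees with it presupposes part of what you are establishing. What your computation actually shows is that $e|_{\CZ}$ \emph{is} a bounded counit, not that it agrees with a pre-existing one. The cleaner route avoids $\lambda$ and $\CZ$ entirely: dualize $e\star f=f$ directly to get, for all $x\in\linfqg$ and $f\in\LO$,
\[
\langle f,(e\otimes\iota)\Gamma(x)\rangle=\langle e\star f,x\rangle=\langle f,x\rangle,
\]
whence $(e\otimes\iota)\Gamma=\iota$ on $\linfqg$. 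That is precisely the statement that $e\in\LO$ is a normal (left) counit, and you can then invoke the structural equivalence with discreteness. The right-identity case yields $(\iota\otimes e)\Gamma=\iota$ by the same one-line dualization.
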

In contrast to the last proposition, we have the following.
\begin{proposition}\label{216} Let $\G$ be a locally compact quantum group. Then the following hold:
\begin{itemize}
  \item[(1)]   $\T_\star(\G)$ does not have a left identity, unless $\G$ is trivial,
  and it has a right identity if and only if $\G$ is discrete;
  \item[(2)]  $\T_\star(\G)$ does not have a left approximate identity, unless $\G$ is trivial,
  and it has a bounded right approximate identity if and only if $\G$ is co-amenable.
\end{itemize}
\end{proposition}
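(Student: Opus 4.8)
The four assertions naturally split into their necessity and sufficiency halves. For the necessity directions I would simply push the hypothesis forward along the surjective homomorphism $\pi:\T_\star(\G)\twoheadrightarrow\LO$ of Proposition~\ref{211} and invoke Proposition~\ref{215}; the delicate left-hand non-existence statements, on the other hand, I would obtain uniformly from the commutation relation of Proposition~\ref{224}, and the sufficiency of discreteness/co-amenability from an explicit construction on $\Tr$.

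I begin with the left-hand statements of (1) and (2), treating them together. Suppose $(e_i)$ is a left approximate identity for $\T_\star(\G)$ (a left identity being a special case). Applying Proposition~\ref{224} in the form $e_i\star(\xi\bullet\eta)=\la\eta,1\rangle\,e_i\star\xi$ and letting $i$ run, the left-hand side tends to $\xi\bullet\eta$ while the right-hand side tends to $\la\eta,1\rangle\,\xi$; hence
\[
\xi\bullet\eta=\la\eta,1\rangle\,\xi\qquad(\xi,\eta\in\Tr).
\]
Testing this against $x\in\B(\LT)$ and using that elementary tensors separate $\B(\LT)\vtp\B(\LT)$, this says precisely that $\hat\Gamma(x)=x\ot 1$ for every $x$, in particular on $\LLL$. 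A co-product of this degenerate form forces the underlying algebra to be $\C 1$ (apply the counit, or left-invariance of the Haar weight, of $\hat\G$), so $\LLL=\C 1$, i.e. $\hat\G$ — and therefore $\G$ — is trivial. No boundedness of $(e_i)$ is used, which is why the conclusion is the same for identities and for approximate identities.

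For the right-hand statements, necessity is immediate: if $e$ is a right identity (resp. $(e_i)$ a bounded right approximate identity) of $\T_\star(\G)$, then $\pi(e)$ is a right identity (resp. $(\pi(e_i))$ a bounded right approximate identity) of $\LO$, since $\pi$ is a surjective algebra homomorphism, and Proposition~\ref{215} gives that $\G$ is discrete (resp. co-amenable). For sufficiency I would build the (approximate) identity as a vector functional: for a unit vector $\xi\in\LT$ let $\om_\xi\in\Tr$, $\om_\xi(T)=\la T\xi,\xi\rangle$. A direct computation using $\Gamma(x)=V(x\ot1)V^*$ shows that $\la\rho\star\om_\xi,x\rangle=\la\rho,(\id\ot\om_\xi)\Gamma(x)\rangle$, and that if $\xi$ satisfies the invariance $V(\eta\ot\xi)=\eta\ot\xi$ for all $\eta\in\LT$ then $(\id\ot\om_\xi)\Gamma(x)=x$ for every $x\in\B(\LT)$; thus $\om_\xi$ is an honest right identity. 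Likewise a net of unit vectors with $\|V(\eta\ot\xi_i)-\eta\ot\xi_i\|\ra0$ for each $\eta$ yields $(\id\ot\om_{\xi_i})\Gamma(x)\ra x$ weak$^*$ for all $x$, so that $\rho\star\om_{\xi_i}\ra\rho$ weakly; as $\|\om_{\xi_i}\|=1$, the usual convexity argument then upgrades $(\om_{\xi_i})$ to a bounded right approximate identity in norm.

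The step I expect to be the main obstacle is producing these vectors from the topological hypotheses: a genuinely $V$-invariant unit vector when $\G$ is discrete, and a net of almost $V$-invariant unit vectors when $\G$ is co-amenable. In the discrete case $\xi$ should be the vector implementing the (now normal) co-unit of $\LL$ sitting in the trivial block of $\hat\G$, and one must check that it is \emph{exactly} invariant for $V$ — not merely that $\om_\xi$ reproduces the co-unit on $\LL$. In the co-amenable case the required almost-invariant vectors are the analytic incarnation of the amenability of $\hat\G$ (equivalently, of an approximate co-unit), and the point is to match them with the bounded approximate identity of $\LO$ furnished by Proposition~\ref{215}. Everything else is formal: the left-hand statements follow from Proposition~\ref{224}, and right-hand necessity from Propositions~\ref{211} and~\ref{215}.
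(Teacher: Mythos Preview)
Your proposal is essentially correct, but the paper handles the two non-routine points more economically. For the left-hand non-existence the paper does not go through Proposition~\ref{224} at all: it simply remarks that any nonzero $\omega_0\in\ker\pi$ (such $\omega_0$ exist unless $\LL=\B(\LT)$) satisfies $\rho\star\omega_0=0$ for every $\rho$, because $\Gamma(\B(\LT))\subseteq\B(\LT)\vtp\LL$; a nonzero right annihilator is incompatible with any left approximate identity. Your argument via Proposition~\ref{224} is valid, but note that the closing step (``apply the counit of $\hat\G$'') presupposes that $\hat\G$ is co-amenable; a hypothesis-free way to finish is to combine $\hat\Gamma(\hat x)=\hat x\otimes1$ with the antipode identity $\hat\Gamma\circ\hat R=\chi\circ(\hat R\otimes\hat R)\circ\hat\Gamma$ to obtain $\hat\Gamma(\hat x)=1\otimes\hat x$ as well, forcing $\hat x\in\C1$.

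For ``discrete $\Rightarrow$ right identity'' the paper sidesteps exactly what you flag as the main obstacle: rather than producing a genuinely $V$-invariant vector, it takes \emph{any} norm-preserving extension $\tilde e\in\Tr$ of the counit $e\in\LO$ and checks $\rho\star\tilde e=\rho$ by pairing against products $x\hat x$ ($x\in\LL$, $\hat x\in\LLL$), using $\Gamma(x\hat x)=\Gamma(x)(\hat x\otimes1)$ together with the weak$^*$ density of $\operatorname{span}\{x\hat x\}$ in $\B(\LT)$. This avoids the invariant-vector verification entirely. For the co-amenable $\Rightarrow$ bounded right approximate identity direction the two arguments coincide (both invoke \cite[Theorem~3.1]{B-T} for almost $V^*$-invariant unit vectors); the paper obtains norm convergence directly by estimating $|\langle\omega_\eta\star\omega_{\xi_i}-\omega_\eta,x\rangle|\le2\|V^*(\eta\otimes\xi_i)-\eta\otimes\xi_i\|$ uniformly in $\|x\|\le1$, so the convexity upgrade is unnecessary. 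The necessity directions are identical to yours.
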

\begin{proof} \ (1): let $\omega_0\in\Tr$ be a
non-zero normal functional whose restriction to $\LL$ is zero. Since we have
$$\Gamma(\B(\LT))\subseteq\B(\LT)\vtp\LL,$$
it follows that
\[
\langle\rho\star\omega_0,x\rangle = \langle\omega_0,(\rho\ot\id)\Gamma(x)\rangle = 0,
\]
for all $\rho\in\Tr$ and $x\in\B(\LT)$, which obviously implies
that there does not exist a left identity, unless $\G$ is trivial (equal to $\C$).

Now, let $\G$ be discrete, $e \in L^1(\G)$ be the
unit element, and $\tilde e\in\Tr$ be a norm preserving weak$^\ast$-extension of $e$. 
Then we have
\begin{eqnarray*}
\langle\rho\star\tilde e,x\hat{x}\rangle &=& \langle\rho\otimes\tilde e,\Gamma(x)
(\hat{x}\otimes 1)\rangle \\ &=& 
\langle\hat{x}\rho \otimes \tilde e,\Gamma(x)\rangle 
= \langle\pi(\hat{x}\rho)\star e,x\rangle 
\\ &=&  \langle\pi(\hat{x} \rho),x\rangle
= \langle\hat{x}\rho,x\rangle
= \langle\rho,x\hat{x}\rangle
\end{eqnarray*}
for all $\rho \in \T_\star(\G)$, $x \in \LL$ and $\hat{x} \in\LLL$, 
where $\pi:\Tr\twoheadrightarrow \LO$ is the canonical quotient map.
Since the span of the set 
$\{ x\hat x  :  x\in L^\infty(\G) , \hat x\in L^\infty(\hat\G) \}$ is weak$^*$ dense in 
$\B(L^2(\G))$, it follows that $\tilde e$ is a right identity for $\T_\star(\LT)$.

Conversely, assume that $\T_{\star}(\G)$ has a right identity $\tilde e$. 
Then, since by Proposition \ref{211}, the map 
$\pi:\T_{\star}(\G)\rightarrow L^1(\G)$ is a surjective homomorphism, 
$\pi(\tilde e)$ is clearly a right identity for $L^1(\G)$, 
whence $\G$ is discrete by Proposition \ref{215}.\\
(2): Similarly to the first part, one can show that $\T_{\star}(\G)$ cannot possess a left approximate identity,
unless it is trivial (equal to $\C$).

Let $\G$ be co-amenable. Then, by \cite[Theorem 3.1]{B-T},
there exists a net $(\xi_i)$ of unit vectors in $\LT$ such that
$$\| V^*(\eta\ot\xi_i) - \eta\ot\xi_i\|\rightarrow 0$$
 for all 
unit vectors $\eta\in\LT$.
Now, for all $x\in\B(\LT)$ and $\eta\in \LT$ with $\|x\| = \|\eta\| = 1$ we have
\begin{eqnarray*}
\left |\la \omega_\eta\star\omega_{\xi_i} - \omega_\eta , x \rangle\right | &=& 
|\la V(x\ot 1)V^*(\eta\ot\xi_i), \eta\ot\xi_i\rangle - \la (x\ot 1)(\eta\ot\xi_i), \eta\ot\xi_i\rangle| \\ 
&=& |\la (x\ot 1)(V^*(\eta\ot\xi_i) - \eta\ot\xi_i) , V^*(\eta\ot\xi_i)\rangle +\\ 
& & \la (x\ot 1) (\eta\ot\xi_i) , V^*(\eta\ot\xi_i) - \eta\ot\xi_i\rangle|\\
&\leq& 2 \| V^*(\eta\ot\xi_i) - \eta\ot\xi_i\|\rightarrow 0.
\end{eqnarray*}
Since the span of the set $\{ \omega_\eta : \eta\in \LT \}$
is norm dense in $\Tr$, it follows that
$(\omega_{\xi_i})$ is a right bounded approximate identity for $\T_\star(\G)$.

Conversely, if $\T_{\star}(\G)$ has a right bounded approximate identity, then a similar
 argument to the proof of part (1) shows that $\G$ is co-amenable.
\end{proof}

\begin{proposition} \label{217} A locally compact quantum group 
$\G$ is compact if and only if there exists a state $\tilde\varphi\in \T_\star(\G)$ such that
\begin{equation}\label{e217}
\langle\rho\star\tilde\varphi,x \hat{x}\rangle = \langle\rho\star\tilde\varphi,\hat{x}x\rangle
= \langle\rho,\hat{x}\rangle\langle\tilde\varphi,x\rangle
\end{equation}
for all $x \in \LL$, $\hat{x} \in \LLL$ and $\rho\in \T_\star(\G)$.
\end{proposition}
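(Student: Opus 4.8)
The plan is to prove the two implications by quite different means: for sufficiency I will manufacture the state $\tilde\varphi$ from the Haar state via its GNS vector, and for necessity I will read off a normal invariant state simply by specializing $\hat x = 1$ in (\ref{e217}).

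Suppose first that $\G$ is compact, so that the left Haar weight $\varphi$ is a bi-invariant normal state on $\LL$. Let $\xi_\varphi\in\LT$ be the cyclic GNS vector implementing $\varphi$, and set $\tilde\varphi = \omega_{\xi_\varphi}$, the associated normal vector state on $\B(\LT)$; then $\tilde\varphi\in\T_\star(\G)$ is a state with $\tilde\varphi|_{\LL} = \varphi$. The computational heart is the identity $\Gamma(\hat x) = \hat x\ot 1$ for $\hat x\in\LLL$, which holds because $V\in\LLLL\vtp\LL$ and $\LLLL$ commutes with $\LLL$ (indeed $\LLLL = \LLL'$), so that $V$ commutes with $\hat x\ot 1$. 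Since the lifted $\Gamma$ is a normal $*$-homomorphism, this gives $\Gamma(x\hat x) = \Gamma(x)(\hat x\ot 1)$ and $\Gamma(\hat x x) = (\hat x\ot 1)\Gamma(x)$. Applying $\iota\ot\tilde\varphi$ — which acts as $\varphi$ on the second leg, since that leg lies in $\LL$ — and invoking the invariance $(\iota\ot\varphi)\Gamma(x) = \varphi(x)1$, I obtain in both cases $(\iota\ot\tilde\varphi)\Gamma(x\hat x) = \varphi(x)\,\hat x = (\iota\ot\tilde\varphi)\Gamma(\hat x x)$. Pairing with $\rho\in\Tr$ then yields exactly the two equalities of (\ref{e217}), with common right-hand side $\langle\rho,\hat x\rangle\langle\tilde\varphi,x\rangle$.

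For the converse, assume a state $\tilde\varphi\in\T_\star(\G)$ satisfies (\ref{e217}). Setting $\hat x = 1$ collapses the three terms to $\langle\rho\star\tilde\varphi,x\rangle = \langle\rho,1\rangle\langle\tilde\varphi,x\rangle$ for all $\rho\in\Tr$, which by the definition of $\star$ reads $\langle\rho,(\iota\ot\tilde\varphi)\Gamma(x)\rangle = \langle\rho,\langle\tilde\varphi,x\rangle 1\rangle$ and hence $(\iota\ot\tilde\varphi)\Gamma(x) = \langle\tilde\varphi,x\rangle 1$. Restricting to $x\in\LL$, the functional $\phi := \tilde\varphi|_{\LL}$ is a normal state on $\LL$ (normal since $\tilde\varphi\in\Tr$) satisfying $(\iota\ot\phi)\Gamma(x) = \phi(x)1$, that is, a normal invariant state; it then remains to conclude that $\G$ is compact.

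The step I expect to be the main obstacle is precisely this last deduction, from a normal invariant state to compactness. Invariance alone is not enough — non-normal invariant means exist whenever the object is amenable but non-compact — so normality is essential and must be used. I would argue via uniqueness of the Haar weight: a bounded, hence normal, invariant functional forces the right Haar weight to be finite, whence $\G$ is compact; alternatively one invokes the known characterization of compactness by the existence of a normal invariant state on $\LL$. A secondary point demanding care is the identity $\Gamma(\hat x) = \hat x\ot 1$ together with the correct reading of the invariance convention for the lifted co-product, both of which I would pin down using $V\in\LLLL\vtp\LL$, the relation $\LLLL = \LLL'$, and the bi-invariance of the Haar state in the compact case.
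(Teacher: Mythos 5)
Your proof is correct and takes essentially the same route as the paper's: in the forward direction you build a norm-preserving extension of the Haar state (the paper takes an abstract one, you take the concrete GNS vector state $\omega_{\xi_\varphi}$) and exploit $\Gamma(\hat x)=\hat x\otimes 1$ together with invariance of $\varphi$; in the converse you specialize $\hat x=1$ to extract a normal invariant state on $\LL$ and invoke the standard characterization of compactness, which is precisely the paper's citation of B\'edos--Tuset. The only differences are cosmetic (slice-map versus pairing bookkeeping), so there is nothing to fix.
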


\begin{proof}
Suppose that $\G$ is compact with normal Haar state $\varphi$, and 
$\tilde\varphi \in \T_\star(\G)$ is a norm preserving extension of $\varphi$.
Then $\tilde\fee$ is a state (since $\|\tilde\fee\| = \tilde\fee(1)=1$), and we have
\begin{eqnarray*}
 \la\rho\star\tilde\varphi,x \hat{x}\rangle & = & \la \rho\otimes\tilde\varphi,\Gamma(x)(\hat{x}\otimes 1)\rangle = 
\la\hat{x}\rho\otimes\tilde\varphi,\Gamma(x)\rangle \\
& = & \la\pi(\hat{x}\rho)\otimes\pi(\tilde\varphi),\Gamma(x)\rangle  =  \la\pi(\hat{x}\rho)\star\varphi,x\rangle \\
& = & \la\pi(\hat{x}\rho),1\rangle\la\varphi,x\rangle  =  \la\hat{x}\rho,1\rangle\la\varphi,x\rangle\\
& = & \la\rho,\hat{x}\rangle\la\varphi,x\rangle  =  \la\rho,\hat{x}\rangle\la\tilde\varphi,x\rangle.
\end{eqnarray*}
In a similar way, we can show that $\la\rho\star\tilde\varphi,\hat{x}x\rangle = \la\rho,\hat{x}\rangle\la\tilde\varphi,x\rangle$.

Conversely, suppose such a state $\tilde\fee\in \T_\star(\G)$ exists. Let $\fee = \pi(\tilde\fee)\in L^1(\G)^+$ and
$f\in L^1(\G)$, and let $\tilde f\in \T_\star(\G)$ be a weak$^\ast$-extension of $f$.
Then, by putting $\hat x = 1$ in equation (\ref{e217}), we have
\[
 \la f\star\varphi, x\rangle
= \la\tilde f \star\tilde\varphi,x\rangle  =  \la\tilde f,1\rangle\la\tilde\varphi,x\rangle
 = \la f,1\rangle\la\varphi,x\rangle
\]
for all $x\in \LL$.
Hence, $\fee$ is a left invariant state in $L^1(\G)$, and so $\G$ is compact, by \cite[Proposition 3.1]{B-T}.
\end{proof}

\section{Cohomological Properties of Convolution Algebras}
The following result was proved in the more general setting of Hopf-von Neumann algebras
in \cite[Theorem 2.3]{Aristov2}.
\begin{proposition}\label{222}
$\C$ is a projective $\T_{\star} (\G)$-module if and only if $\G$ is compact.
\end{proposition}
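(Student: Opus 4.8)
=== PROOF PROPOSAL ===

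\textbf{Overall strategy.} The plan is to use the module-theoretic characterization of projectivity from Theorem \ref{33383}, together with the algebraic results already established in this section, most crucially Proposition \ref{217} which characterizes compactness by the existence of a special state $\tilde\varphi\in\T_\star(\G)$. The module $\C$ here carries the right $\T_\star(\G)$-action $\lambda\triangleleft\rho = \lambda\, tr(\rho)$ defined just after Proposition \ref{211}. Since $tr$ is a Banach algebra homomorphism (by Proposition \ref{211}) and is surjective onto $\C$, the module $\C$ is essential: $\C\triangleleft\T_\star(\G)=\C$. Hence Theorem \ref{33383} applies, and projectivity of $\C$ is equivalent to the existence of a morphism $\psi:\C\to\C\tp\T_\star(\G)$ with $\mathfrak{m}\circ\psi=\id_\C$, where $\mathfrak{m}$ is the action map.

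\textbf{The compact $\Rightarrow$ projective direction.} Since $\C\tp\T_\star(\G)\cong\T_\star(\G)$ canonically, a splitting morphism $\psi$ amounts to choosing an element $u\in\T_\star(\G)$ with $tr(u)=1$ satisfying the module-map (right-multiplier) compatibility $u\triangleleft\rho = tr(\rho)\,u$ for all $\rho$, i.e. $u\star\rho = tr(\rho)\,u$ up to identifying the action. Here is where I would invoke Proposition \ref{217}: if $\G$ is compact, take $u=\tilde\varphi$, the state furnished by that proposition. The defining relation (\ref{e217}), read with $\hat x=1$, says $\langle\rho\star\tilde\varphi,x\rangle=\langle\rho,1\rangle\langle\tilde\varphi,x\rangle = tr(\rho)\langle\tilde\varphi,x\rangle$, which is exactly the required multiplier identity showing that $\rho\mapsto \tilde\varphi\star\rho$ (or the appropriate one-sided action) reproduces $tr(\rho)\tilde\varphi$. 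Feeding $\tilde\varphi$ into the formula of Theorem \ref{33383} produces the desired splitting, so $\C$ is projective.

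\textbf{The projective $\Rightarrow$ compact direction.} Conversely, assume $\C$ is projective. By Theorem \ref{33383} there is a splitting, equivalently an element $u\in\T_\star(\G)$ with $tr(u)=1$ and $\rho\star u = tr(\rho)\,u$ for all $\rho\in\T_\star(\G)$ (the invariance forced by the module-map property). The goal is to manufacture from $u$ a state satisfying the conditions of Proposition \ref{217}, or more directly to produce an invariant mean on $\LL$ witnessing compactness. I would push $u$ down via the quotient map $\pi:\T_\star(\G)\twoheadrightarrow\LO$ of Proposition \ref{211}; the invariance relation for $u$ descends to an invariance relation for $\pi(u)\in\LO$, and since $tr(u)=1$ one checks $\pi(u)$ is nonzero with the right normalization, yielding a left-invariant element of $\LO$ and hence, via \cite[Proposition 3.1]{B-T}, compactness of $\G$. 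The delicate point, and the step I expect to be the main obstacle, is the interplay between the abstract left-inverse $u$ coming from projectivity and the two-sided covariance relation (\ref{e217}): the splitting only gives one-sided invariance with respect to $tr$, so I must verify that this suffices to land in the hypotheses of Proposition \ref{217} (or bypass it and directly produce a Haar-type functional), taking care that $\pi(u)$ does not degenerate to zero and that positivity/normalization can be arranged. Handling this normalization and the possible loss of positivity when passing through $\pi$ is the crux of the argument.
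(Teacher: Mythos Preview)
The paper does not supply its own proof of this proposition; it is quoted from \cite[Theorem 2.3]{Aristov2}. So there is no ``paper's proof'' to compare your attempt against, and your outline must stand on its own.

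Your overall framework (reduce via Theorem~\ref{33383} to finding $u\in\T_\star(\G)$ with $tr(u)=1$ and the appropriate invariance, then link this to compactness through an invariant functional in $\LO$) is sound. The converse direction is essentially fine: from $u\star\rho=tr(\rho)\,u$ one applies $\pi$ to obtain $\pi(u)\star f=f(1)\,\pi(u)$ for all $f\in\LO$, and $\pi(u)(1)=tr(u)=1$ guarantees $\pi(u)\neq 0$; dualizing, $(\pi(u)\otimes\iota)\Gamma(x)=\pi(u)(x)1$, so the right Haar weight is bounded and $\G$ is compact. Positivity is not the obstacle you fear.

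The genuine gap is in the direction compact $\Rightarrow$ projective. You correctly identify that you need $u\star\rho=tr(\rho)\,u$ for \emph{all} $\rho\in\T_\star(\G)$, i.e.\ as functionals on all of $\B(\LT)$. But what you actually extract from Proposition~\ref{217} by setting $\hat x=1$ is the identity $\langle\rho\star\tilde\varphi,x\rangle=tr(\rho)\langle\tilde\varphi,x\rangle$ for $x\in\LL$ only. This is the wrong side of the convolution ($\rho\star\tilde\varphi$ rather than $\tilde\varphi\star\rho$), and it is only verified on $\LL$, not on $\B(\LT)$. Indeed the full formula (\ref{e217}) gives $\langle\rho\star\tilde\varphi,x\hat x\rangle=\langle\rho,\hat x\rangle\langle\tilde\varphi,x\rangle$, which is \emph{not} $tr(\rho)\langle\tilde\varphi,x\hat x\rangle$ in general, so neither $\rho\star\tilde\varphi=tr(\rho)\tilde\varphi$ nor $\tilde\varphi\star\rho=tr(\rho)\tilde\varphi$ holds for an arbitrary extension $\tilde\varphi$.

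The repair is small but essential: do not take $u=\tilde\varphi$; take $u=\rho_0\star\tilde\varphi$ for any fixed $\rho_0\in\Tr$ with $tr(\rho_0)=1$. By (\ref{e217}) this $u$ satisfies $u(x\hat x)=\rho_0(\hat x)\varphi(x)$. Then for $x\in\LL$, $\hat x\in\LLL$,
\[
\langle u\star\rho,x\hat x\rangle
=\langle \hat x u\otimes\rho,\Gamma(x)\rangle
=\langle \pi(\hat x u)\star\pi(\rho),x\rangle,
\]
and since $\pi(\hat x u)(y)=u(y\hat x)=\rho_0(\hat x)\varphi(y)$ one gets $\pi(\hat x u)=\rho_0(\hat x)\varphi$, whence by right invariance of $\varphi$ in $\LO$,
\[
\langle u\star\rho,x\hat x\rangle=\rho_0(\hat x)\,\pi(\rho)(1)\,\varphi(x)=tr(\rho)\,u(x\hat x).
\]
Weak$^*$ density of the span of $\{x\hat x\}$ then yields $u\star\rho=tr(\rho)\,u$, and $tr(u)=\rho_0(1)\varphi(1)=1$, completing the argument.
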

In the following, we want to  prove a statement similar to Proposition \ref{222}, for
discreteness of $\G$.
But the situation is  more subtle in this case. There are some technical difficulties
which arise when one tries to link the quantum group structure to the quantum
Banach space structure. This happens mainly because the latter
is essentially defined based on the Banach space structure of these algebras,
and do not seem to see all aspects of the quantum group structure.
These technical issues appear also in some of the open problems in this theory,
and seem to be a major subtle point (c.f. \cite{Daws}).

To avoid such difficulties, in the rest of this section,
we assume that the morphisms are completely contractive, rather than just completely bounded.
\begin{theorem}\label{3310}
Let $\G$ be a locally compact quantum group. Then the following are equivalent:
\begin{itemize}
\item[(1)]
there exists a normal conditional expectation $E:\B(\LT)\ra \LL$ which satisfies $\Gamma\circ E = (E\otimes E)\Gamma$;
\item[(2)]
there exists a normal conditional expectation $E:\B(\LT)\ra \LL$ which satisfies $\Gamma\circ E = (E\otimes \iota)\Gamma$;
\item[(3)]
there exists a normal conditional expectation $E:\B(\LT)\ra \LL$ which satisfies $E(\LLL)\subseteq \C1$;
\item[(4)]
$\G$ is discrete.
\end{itemize}
\end{theorem}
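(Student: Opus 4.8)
The plan is to run the cycle $(4)\Rightarrow(2)\Rightarrow(3)\Rightarrow(4)$ and to note that $(1)\Leftrightarrow(2)$ comes almost for free, so that all four conditions become equivalent. The equivalence $(1)\Leftrightarrow(2)$ should be immediate for a \emph{fixed} $E$: since $\Gamma(\B(\LT))\subseteq\B(\LT)\vtp\LL$ and any such $E$ restricts to the identity on $\LL$, the second leg of $\Gamma(x)$ already lies in $\LL$, whence $(E\ot E)\Gamma=(E\ot\iota)\Gamma$; thus the intertwining relations in (1) and (2) coincide.

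For the two genuinely easy implications I would first record that the lifted co-product is trivial on the dual algebra: for $\hat x\in\LLL=(\LLLL)'$, the element $\hat x\ot 1$ commutes with $V\in\LLLL\vtp\LL$, so $\Gamma(\hat x)=V(\hat x\ot 1)V^*=\hat x\ot 1$ (this is precisely the computation behind Proposition \ref{212}). Granting this, $(2)\Rightarrow(3)$ runs as follows: for $\hat x\in\LLL$ condition (2) gives $\Gamma(E(\hat x))=(E\ot\iota)\Gamma(\hat x)=(E\ot\iota)(\hat x\ot 1)=E(\hat x)\ot 1$, so by Proposition \ref{212} we get $E(\hat x)\in\LLL$; since also $E(\hat x)\in\LL$ and $\LL\cap\LLL=\C 1$, it follows that $E(\hat x)\in\C 1$, which is (3).

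The construction $(4)\Rightarrow(2)$ is where I would produce the expectation explicitly. Assuming $\G$ discrete, Proposition \ref{215} supplies the identity $e\in\LO$ (a state, as $e(1)=1$); let $\tilde e\in\Tr$ be a norm-preserving extension and put $E:=(\tilde e\ot\iota)\Gamma:\B(\LT)\to\B(\LT)$. Because $\Gamma(x)\in\B(\LT)\vtp\LL$, slicing the first leg lands $E$ in $\LL$; because $e$ is the identity of $\LO$ we have $E|_{\LL}=(e\ot\iota)\Gamma=\iota$, so $E$ is a normal unital completely positive idempotent onto $\LL$, i.e.\ a normal conditional expectation by Tomiyama's theorem. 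The intertwining $\Gamma E=(E\ot\iota)\Gamma$ should then fall out of coassociativity: both sides reduce to $(\tilde e\ot\iota\ot\iota)(\Gamma\ot\iota)\Gamma$ once one uses $(\iota\ot\Gamma)\Gamma=(\Gamma\ot\iota)\Gamma$.

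The main obstacle is $(3)\Rightarrow(4)$. From $E(\LLL)\subseteq\C 1$ one obtains a normal state $\hat\psi$ on $\LLL$ with $E|_{\LLL}=\hat\psi(\cdot)1$, and $\LL$-bimodularity forces $E$ to be the normal extension of $x\hat x\mapsto\hat\psi(\hat x)\,x$ on the weak$^*$-dense span of $\{x\hat x:x\in\LL,\ \hat x\in\LLL\}$. The plan is to show that $\hat\psi$ is invariant, hence is the (necessarily finite, being normal) Haar state of $\hat\G$; then $\hat\G$ is compact and $\G$ is discrete. This is the delicate step, because (3) controls $E$ only through the $\LL$-module structure, whereas invariance is a statement about the co-product $\hat\Gamma$ on $\LLL$; bridging the two should require feeding the covariance of the fundamental unitary $W\in\LL\vtp\LLL$ (equivalently $\Gamma(x)=W^*(1\ot x)W$) into the bimodularity and normality of $E$. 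The classical case is a useful guide: there $\LL=L^\infty(G)$ is a MASA in $\B(\LT)$ that admits a normal conditional expectation exactly when it is atomic, i.e.\ when $G$ is discrete, and $\hat\psi$ is then forced to be the canonical trace on $VN(G)=\LLL$. I expect the honest work to lie in converting this heuristic into a proof that $\hat\psi$ is $\hat\Gamma$-invariant (or, equivalently, in exhibiting the identity of $\LO$ directly from $E$).
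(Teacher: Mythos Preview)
Your treatment of $(1)\Leftrightarrow(2)$, $(2)\Rightarrow(3)$, and $(4)\Rightarrow(2)$ is correct and coincides with the paper's argument almost verbatim (the paper phrases $(2)\Rightarrow(3)$ as $(1)\Rightarrow(3)$, but via $(1)\Leftrightarrow(2)$ this is the same computation; your appeal to Proposition~\ref{212} to extract $E(\hat x)\in\LL\cap\LLL=\C1$ from $\Gamma(E(\hat x))=E(\hat x)\ot 1$ is exactly the implicit step the paper uses). Your construction of $E=(\tilde e\ot\iota)\Gamma$ in $(4)\Rightarrow(2)$ is the paper's construction.

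The genuine gap is $(3)\Rightarrow(4)$, as you yourself flag. You correctly reduce to showing that the normal state $\hat\psi$ on $\LLL$ (defined by $E(\hat x)=\hat\psi(\hat x)1$) is left invariant, but then stop at a heuristic. The paper closes this gap not by a direct covariance computation with $W$, but by invoking the representation theorem of Junge--Neufang--Ruan \cite[Theorem~4.5]{J-N-R}: since $E$ is a normal completely bounded $\LL$-bimodule map on $\B(\LT)$ which sends $\LLL$ into itself (indeed into $\C1\subseteq\LLL$), it lies in $\mathcal{CB}^{\sigma,\LLL}_{\LL}(\B(\LT))$ and is therefore of the form $\hat\Theta^r(\hat m)$ for some right centralizer $\hat m\in C^r_{cb}(L^1(\hat\G))$. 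Unwinding the definition of $\hat\Theta^r$ on $\LLL$ gives $\hat m(\hat\omega)=\langle\hat\omega,1\rangle\,\hat\psi$ for all $\hat\omega\in L^1(\hat\G)$, and the centralizer identity $\hat m(\hat\omega\star\hat\omega_0)=\hat\omega\star\hat m(\hat\omega_0)$ then yields $\hat\omega\star\hat\psi=\langle\hat\omega,1\rangle\,\hat\psi$, i.e.\ left invariance. Hence $\hat\G$ is compact and $\G$ is discrete. So the missing ingredient in your sketch is precisely this external structural result identifying such bimodule maps with centralizers of $L^1(\hat\G)$; once you have it, invariance of $\hat\psi$ is a two-line consequence rather than something to be wrung directly out of $W$.
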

\begin{proof} $(1)\Leftrightarrow (2):$
This follows from the facts that $E=\iota$ on $\LL$, and that $\Gamma(\B(\LT))\subseteq \B(\LT)\vtp \LL$.\\
$(1)\Rightarrow (3):$ Let $\hat x\in\LLL$. We have:
\begin{eqnarray*}
\Gamma(E(\hat x)) = (E\otimes E)\Gamma(\hat x)
= (E\otimes E)(\hat x\otimes 1) = E(\hat x)\otimes 1,
\end{eqnarray*}
which implies that $E(\hat x)\in\C1$.\\
$(3)\Rightarrow (4):$ Assumption (3) implies that
 $$E\in \mathcal{CB}^{\sigma, \LLL}_\LL(\B(\LT)),$$
and hence it follows by \cite[Theorem 4.5]{J-N-R} that 
there exists a right centralizer $\hat m\in C_{cb}^r(L^1(\hat\G))$ such that
$$E = \hat\Theta^r(\hat m).$$
Now, define a complex-valued map $\hat f$ on $\LLL$ such that
$$E(\hat x) = \hat f(\hat x)1$$
for all $\hat x\in \LLL$. Since $E$ is a unital
linear normal positive map, $\hat f$ is a normal state on $\LLL$,
and for every $\hat\omega\in L^1(\hat\G)$ and $\hat x\in\LLL$ we have
\begin{eqnarray*}
\langle \hat m(\hat\omega),\hat x\rangle = \langle \hat\omega,\hat\Theta^r(\hat m)(\hat x)\rangle
= \langle \hat\omega,E(\hat x)\rangle 
= \langle \hat\omega,\hat f(\hat x)1\rangle = \langle \hat\omega,1\rangle \hat f(\hat x).
\end{eqnarray*}
Hence $\hat m(\hat\omega) = \langle \hat\omega,1\rangle \hat f$.
Now, fix $\hat\omega_0\in L^1(\hat\G)$, then for all $\hat\omega\in L^1(\hat\G)$ we have
\begin{equation*}
\hat\omega\star\hat f  = \hat\omega\star\hat m(\hat\omega_0)  = \hat m(\hat\omega\star\hat\omega_0)
= \langle\hat\omega\star\hat\omega_0,1\rangle \hat f = \langle\hat\omega,1\rangle \hat f.
\end{equation*}
Hence, $\hat f$ is a normal left invariant state on $\LLL$, and therefore $\hat \G$ is compact by
\cite[Proposition 3.1]{B-T}, and (4) follows.\\
$(4)\Rightarrow (2):$ Let $e$ be the identity of $\LO$, and let $\tilde e\in\Tr$
be a norm-preserving extension of $e$. Define:
\[
E: \B(\LT)\ra \LL; \ \ \  x \mapsto (\tilde e\ot\id)\Gamma(x).
\]
Then $E$ is normal, unital and completely contractive, since both $(\tilde e\ot\id)$ and $\Gamma$ are,
which also implies that $\|E\| = 1$. For all $x\in \LL$ and $f\in\LO$ we have
\[
\langle f , E(x) \rangle = \langle f , (\tilde e\ot\id)\Gamma(x) \rangle =
\langle \tilde e\ot f , x \rangle = \langle f , x \rangle,
\]
which implies that $E^2 = E$, and $E$ is surjective. Hence, $E$ is a conditional expectation on $\LL$.
Now, for all $x\in\B(\LT)$, we have
\begin{eqnarray*}
\Gamma(E(x)) &=& \Gamma((\tilde e\ot\id)\Gamma(x)) 
= (\tilde e\ot\id\ot\id)((\id\ot\Gamma)\Gamma(x))\\ 
&=& (\tilde e\ot\id\ot\id)((\Gamma\ot\id)\Gamma(x))
= (((\tilde e\ot\id)\Gamma)\ot\id) \Gamma(x)
\\ &=& (E\ot\id)\Gamma(x).
\end{eqnarray*}
Hence, $\Gamma\circ E = (E\ot\id)\Gamma$, and $(2)$ follows.
\end{proof}
\begin{remark}\label{92min}
 One can easily modify the above proof to obtain a right version of Theorem \ref{3310}; then
in part (2) we have $\Gamma\circ E = (\id\ot E)\Gamma$, and in part (3), $E(\LLLL)\subseteq\C1$.
\end{remark}

\begin{corollary}\label{3311}
For a locally compact quantum group $\G$ the following are equivalent:
\begin{itemize}
\item[(1)]
there exists an isometric algebra homomorphism $\Phi:\LO\ra \T_\star(\G)$
such that $\pi\circ\Phi = \id_\LO$;
\item[(2)]
$\G$ is discrete.
\end{itemize}
\end{corollary}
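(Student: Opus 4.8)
The plan is to exploit the duality between sections $\Phi$ of $\pi$ and normal conditional expectations $E\colon\B(\LT)\ra\LL$, and then to read off the equivalence directly from Theorem \ref{3310}. Concretely, I would set up a correspondence $\Phi\leftrightarrow E$ by preadjunction: to a normal conditional expectation $E$ one associates $\Phi=E_*$, i.e. $\langle\Phi(f),y\rangle=\langle f,E(y)\rangle$ for $f\in\LO$, $y\in\B(\LT)$ (note $f\circ E\in\Tr$ since $E$ is normal); conversely, to a completely contractive section $\Phi$ one associates $E=\Phi^*\colon\B(\LT)\ra\LL$. The key observation to be verified is that, under this correspondence, \emph{$\Phi$ is an algebra homomorphism for $\star$ if and only if $E$ satisfies the covariance condition $\Gamma\circ E=(E\ot E)\Gamma$} of Theorem \ref{3310}(1).

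For $(2)\Rightarrow(1)$ I would assume $\G$ discrete, so Theorem \ref{3310} yields a normal conditional expectation $E$ with $\Gamma\circ E=(E\ot E)\Gamma$, and set $\Phi=E_*$. Then $\Phi$ is completely contractive, being the preadjoint of the unital completely positive (hence completely contractive) map $E$; and for $x\in\LL$ one has $\langle\Phi(f),x\rangle=\langle f,E(x)\rangle=\langle f,x\rangle$, so $\pi\circ\Phi=\id_\LO$, which together with contractivity forces $\Phi$ to be isometric. Finally, pairing $\Phi(f)\star\Phi(g)$ against $y\in\B(\LT)$ gives $\langle f\ot g,(E\ot E)\Gamma(y)\rangle$, whereas $\Phi(f\star g)$ gives $\langle f\ot g,\Gamma(E(y))\rangle$; the covariance identity makes these agree, so $\Phi$ is the desired homomorphism.

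For $(1)\Rightarrow(2)$ I would start from such a $\Phi$ and put $E=\Phi^*\colon\B(\LT)\ra\LL$. As a dual map $E$ is automatically normal, and $\pi\circ\Phi=\id_\LO$ forces $E|_\LL=\id_\LL$ and $E(1)=1$; thus $E$ is a normal, unital, contractive idempotent onto $\LL$, hence a conditional expectation by Tomiyama's theorem. To feed this into Theorem \ref{3310} I must recover the covariance condition: dualizing $\Phi(f\star g)=\Phi(f)\star\Phi(g)$ exactly as above yields $\langle f\ot g,\Gamma(E(y))\rangle=\langle f\ot g,(E\ot E)\Gamma(y)\rangle$ for all $f,g\in\LO$ and $y\in\B(\LT)$. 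Since the span of $\{f\ot g\}$ is dense in the operator space projective tensor product $\LO\tp\LO=(\LL\vtp\LL)_*$, these functionals separate the points of $\LL\vtp\LL$, and both $\Gamma(E(y))$ and $(E\ot E)\Gamma(y)$ lie there; hence $\Gamma\circ E=(E\ot E)\Gamma$, and Theorem \ref{3310} gives that $\G$ is discrete.

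I expect the main obstacle to be the $(1)\Rightarrow(2)$ direction: identifying $E=\Phi^*$ as a genuine normal conditional expectation, and passing from the scalar pairing identity to the operator identity $\Gamma\circ E=(E\ot E)\Gamma$. One must check that unitality, idempotency, and normality all survive dualization, and that the separation-of-points argument in $\LL\vtp\LL$ is justified via the predual identification $\LO\tp\LO=(\LL\vtp\LL)_*$ together with $E\ot E$ being the well-defined, normal tensor product of completely positive normal maps.
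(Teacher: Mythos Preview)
Your proposal is correct and follows essentially the same route as the paper: in both directions one passes between $\Phi$ and $E=\Phi^*$ (resp.\ $\Phi=E_*$), verifies that $E$ is a normal conditional expectation, and shows that multiplicativity of $\Phi$ is equivalent to the covariance identity $\Gamma\circ E=(E\ot E)\Gamma$ of Theorem~\ref{3310}(1). The only cosmetic difference is that the paper tests the covariance identity against elements $\rho\ot\eta$ with $\rho,\eta\in\Tr$, whereas you test against $f\ot g$ with $f,g\in\LO$; since both $\Gamma(E(y))$ and $(E\ot E)\Gamma(y)$ lie in $\LL\vtp\LL$, your choice of test functionals is sufficient and arguably tidier.
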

\noindent
\begin{proof} If $\G$ is discrete, then $\Phi$ may be taken to be the pre-adjoint of the map
$E$ constructed in the proof of the implication $(4)\Rightarrow(2)$ in Theorem \ref{3310}.

For the converse, note that $\Phi^*:\B(\LT)\ra\LL$ is a normal surjective
norm-one projection, i.e., a normal conditional expectation. 
Moreover, for $x\in \B(\LT)$ and $\rho,\eta\in \T_\star(\G)$ we have
\begin{eqnarray*}
 \la \rho\otimes\eta , \Gamma (\Phi^*(x))\rangle &=& \la \rho\star\eta , \Phi^*(x)\rangle
\\ &=& \la \Phi(\rho\star\eta) , x\rangle 
\\ &=& \la \Phi(\rho)\star\Phi(\eta) , x\rangle 
\\ &=& \la \Phi(\rho)\otimes\Phi(\eta) , \Gamma(x)\rangle 
\\ &=& \la \rho\otimes\eta , (\Phi^*\otimes\Phi^*)\Gamma(x)\rangle,
\end{eqnarray*}
which implies $\Gamma \circ\Phi^* = (\Phi^*\otimes\Phi^*)\Gamma$,
and hence the theorem follows from Theorem \ref{3310}.
\end{proof}

As we promised earlier in this section, in the following (Theorem \ref{33393}), we prove that
discreteness of a locally compact quantum group $\G$,
can also be characterized in terms of projectivity of its convolution algebras.
We recall that here the morphisms are completely contractive maps, and
$\mathfrak{m}: \LO\tp\T_\star(\G)\ra\LO$ denotes the
canonical map associated with the module action. 
\begin{lemma}\label{90min}
If $\Phi:\LO\ra\T_\star(\G)$ is such that $\pi\circ\Phi = \id_\LO$, then
for all $\eta, \rho\in\T_\star(\G)$ we have:
\[
 \eta\star\Phi(\pi(\rho)) = \eta\star\rho.
\]
\end{lemma}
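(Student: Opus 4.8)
The plan is to test the desired identity against operators of the form $x\hat x$ with $x\in\LL$ and $\hat x\in\LLL$, and to exploit the fact that, for such test operators, the product $\eta\star\sigma$ sees $\sigma$ only through its restriction $\pi(\sigma)$ to $\LL$. Since both sides of the claimed equality are normal functionals on $\B(\LT)$, and the span of $\{x\hat x:x\in\LL,\ \hat x\in\LLL\}$ is weak$^*$ dense in $\B(\LT)$ (the density fact already invoked in the proofs of Propositions \ref{216} and \ref{217}), it suffices to check equality on these test operators.

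First I would record the structural fact that $\Gamma(\hat x)=\hat x\ot 1$ for every $\hat x\in\LLL$. Indeed, $V\in\LLLL\vtp\LL$ and $\LLL=(\LLLL)'$, so $\hat x\ot 1$ commutes with $V$, whence $\Gamma(\hat x)=V(\hat x\ot 1)V^*=\hat x\ot 1$. Because the lifted co-product $\Gamma(x)=V(x\ot 1)V^*$ is multiplicative, this gives $\Gamma(x\hat x)=\Gamma(x)\Gamma(\hat x)=\Gamma(x)(\hat x\ot 1)$ for all $x\in\LL$ and $\hat x\in\LLL$. Consequently, for any $\sigma\in\Tr$,
\[
\la\eta\star\sigma,\,x\hat x\rangle=\la\eta\ot\sigma,\,\Gamma(x\hat x)\rangle=\la\eta\ot\sigma,\,\Gamma(x)(\hat x\ot 1)\rangle.
\]

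The decisive step is then to observe that $\Gamma(\B(\LT))\subseteq\B(\LT)\vtp\LL$, so the operator $\Gamma(x)(\hat x\ot 1)$ lies in $\B(\LT)\vtp\LL$, and therefore $(\eta\ot\id)\big(\Gamma(x)(\hat x\ot 1)\big)\in\LL$. Pairing this element of $\LL$ with $\sigma$ agrees with pairing it against $\pi(\sigma)$, so the quantity $\la\eta\star\sigma,x\hat x\rangle$ depends on $\sigma$ only through $\pi(\sigma)$. Since $\pi\circ\Phi=\id_\LO$ yields $\pi(\Phi(\pi(\rho)))=\pi(\rho)$, the choices $\sigma=\Phi(\pi(\rho))$ and $\sigma=\rho$ have the same image under $\pi$, whence
\[
\la\eta\star\Phi(\pi(\rho)),\,x\hat x\rangle=\la\eta\star\rho,\,x\hat x\rangle
\]
for all $x\in\LL$ and $\hat x\in\LLL$. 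The weak$^*$ density then forces $\eta\star\Phi(\pi(\rho))=\eta\star\rho$, as claimed. The only delicate point is the reduction in the third step, namely recognizing that the relevant tensor leg lands in $\LL$ so that the dependence on $\sigma$ factors through $\pi$; everything else is the same bookkeeping already used in the preceding propositions.
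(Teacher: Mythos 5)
Your proof is correct, and its ``decisive step'' is in fact the paper's entire argument: since $\Gamma(\B(\LT))\subseteq\B(\LT)\vtp\LL$, the slice $(\eta\ot\id)\Gamma(y)$ lies in $\LL$ for \emph{every} $y\in\B(\LT)$, so $\la\eta\star\sigma,y\rangle=\la\sigma,(\eta\ot\id)\Gamma(y)\rangle$ depends on $\sigma$ only through $\pi(\sigma)$; combined with $\pi(\Phi(\pi(\rho)))=\pi(\rho)$ this immediately gives the lemma. The paper exploits this directly, writing $\eta\star\rho=\eta\star\pi(\rho)$ (a well-defined pairing of $\T_\star(\G)$ against $\LO$) and then chaining $\eta\star\Phi(\pi(\rho))=\eta\star\pi(\Phi(\pi(\rho)))=\eta\star\pi(\rho)=\eta\star\rho$. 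By contrast, you first reduce to test operators $x\hat x$ with $x\in\LL$, $\hat x\in\LLL$, verify $\Gamma(\hat x)=\hat x\ot 1$ and $\Gamma(x\hat x)=\Gamma(x)(\hat x\ot 1)$, and close with weak$^*$ density of the span of such products. All of that is correct, but it is superfluous scaffolding for this particular statement: the key containment holds on all of $\B(\LT)$, not merely on the dense subspace, so nothing is gained by testing only against products $x\hat x$. Your argument would collapse to the paper's two-line proof if you paired both sides against an arbitrary $y\in\B(\LT)$ from the start.
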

\begin{proof} Recall that $\Gamma(x)\in\B(\LT)\vtp\LL$ for all $x\in\B(\LT)$.
Therefore, we clearly obtain that $\eta\star\rho = \eta\star\pi(\rho)$. Hence, we have
\[
 \eta\star\Phi(\pi(\rho)) = \eta\star\pi(\Phi(\pi(\rho))) = \eta\star\pi(\rho) = \eta\star\rho.
\]
\end{proof}
\begin{lemma}\label{91min}
 Assume that $\Psi:\LO\tp\T_\star(\G)$ is a $\TG$-module morphism
which satisfies $\mathfrak{m}\circ\Psi = \id_\LO$. Then for any
$\hat x\in \LL$ we have $\Psi^*(1\ot\hat x)\in\C1$.
\end{lemma}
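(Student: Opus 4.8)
The plan is to pass to adjoints. Identifying the dual of the operator space projective tensor product of the two preduals, $(\LO\tp\TG)^* = \LL\vtp\B(\LT)$, the adjoint $\Psi^*$ becomes a weak$^*$-continuous map from $\LL\vtp\B(\LT)$ into $\LO^* = \LL$, and since $\Psi$ is a right $\TG$-module morphism, $\Psi^*$ intertwines the adjoint (left) module actions. First I would record these dual actions explicitly. Dualizing the action $f\triangleleft\rho = f\star\pi(\rho)$ on $\LO$ gives, for $a\in\LL$ and $\rho\in\TG$, the formula $\rho\cdot a = (\iota\ot\pi(\rho))\Gamma(a)$, where $\Gamma$ is the coproduct on $\LL$; dualizing the action $(f\ot\eta)\triangleleft\rho = f\ot(\eta\star\rho)$ on $\LO\tp\TG$ gives, for $a\ot T\in\LL\vtp\B(\LT)$, the formula $\rho\cdot(a\ot T) = a\ot(\iota\ot\rho)\Gamma(T)$, where here $\Gamma$ denotes the coproduct lifted to $\B(\LT)$.

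The key computation is to evaluate the second action at $\Xi = 1\ot\hat x$ with $\hat x\in\LLL$. Since $V\in\LLLL\vtp\LL$ and $\LLL$ commutes with $\LLLL$, the element $\hat x\ot 1$ commutes with $V$, so the lifted coproduct satisfies $\Gamma(\hat x) = V(\hat x\ot 1)V^* = \hat x\ot 1$. Consequently $\rho\cdot(1\ot\hat x) = 1\ot(\iota\ot\rho)(\hat x\ot 1) = \la\rho,1\rangle\,(1\ot\hat x)$; that is, $1\ot\hat x$ is fixed by every dual action up to the character $\rho\mapsto\la\rho,1\rangle$.

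Now set $b := \Psi^*(1\ot\hat x)\in\LL$ and apply the intertwining relation $\Psi^*(\rho\cdot\Xi) = \rho\cdot\Psi^*(\Xi)$ (which uses only that $\Psi$ is a module morphism, not the normalization $\mathfrak{m}\circ\Psi = \id_\LO$). This yields $\la\rho,1\rangle\,b = (\iota\ot\pi(\rho))\Gamma(b)$ for all $\rho\in\TG$. Since $\pi$ maps onto $\LO$ and $\la\rho,1\rangle = \la\pi(\rho),1\rangle$, we obtain $(\iota\ot g)\Gamma(b) = \la g,1\rangle b = (\iota\ot g)(b\ot 1)$ for every $g\in\LO$; as the normal functionals in $\LO$ separate the right leg, it follows that $\Gamma(b) = b\ot 1$. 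Proposition \ref{212} then forces $b\in\LLL$, and since also $b\in\LL$, the standard fact $\LL\cap\LLL = \C 1$ (as used in the proof of Corollary \ref{214}) gives $b\in\C 1$, which is the assertion (the hat on $\hat x$ indicating that the relevant elements lie in $\LLL$).

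I expect the main obstacle to be the bookkeeping in the first two paragraphs: correctly identifying $(\LO\tp\TG)^*$ with $\LL\vtp\B(\LT)$ and computing the two adjoint module actions, in particular keeping straight which coproduct (the one on $\LL$ versus its lift to $\B(\LT)$) appears in each slice. Once the clean identity $\Gamma(\hat x) = \hat x\ot 1$ for $\hat x\in\LLL$ is in hand, the remainder is a short slice-map argument combined with Proposition \ref{212}.
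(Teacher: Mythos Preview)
Your proposal is correct and follows essentially the same route as the paper: the paper carries out the pairing computation $\la f\star\pi(\rho),\Psi^*(1\ot\hat x)\rangle = \la f,\Psi^*(1\ot\hat x)\rangle\,\la\rho,1\rangle$ directly, which is exactly your intertwining identity $\rho\cdot b = \la\rho,1\rangle\,b$ written out, and then concludes $\Gamma(b) = b\ot 1$ and hence $b\in\C1$ just as you do. Your observation that the hypothesis $\mathfrak{m}\circ\Psi=\id_\LO$ is not actually used is also borne out by the paper's argument.
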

\begin{proof}
 Let $\hat x\in\LL$, then we have
\begin{eqnarray*}
 \langle f\star\pi(\rho) , \Psi^*(1\ot\hat x)\rangle &=&
 \langle \Psi(f\star\pi(\rho)) , 1\ot\hat x\rangle =
\langle \Psi(f\triangleleft\rho) , 1\ot\hat x\rangle \\ &=&
\langle \Psi(f)\triangleleft\rho , 1\ot\hat x\rangle =
\langle \Psi(f)\star\rho , 1\ot\hat x\rangle \\
&=& \langle \Psi(f)\ot\rho , 1\ot\Gamma(\hat x)\rangle  =
\langle \Psi(f)\ot\rho , 1\ot\hat x\ot 1\rangle \\
&=&\langle \Psi(f) , 1\ot\hat x\rangle \langle \rho , 1\rangle =
\langle f , \Psi^*(1\ot\hat x)\rangle \langle \rho , 1\rangle
\end{eqnarray*}
for all $f\in\LO$ and $\rho\in\TG$. Since $\pi:\TG\ra\LO$ is surjective,
we get
\begin{eqnarray*}
&& \langle f\ot g , \Gamma(\Psi^*(1\ot\hat x))\rangle = 
\langle f\star g , \Psi^*(1\ot\hat x)\rangle \\&=&
\langle f , \Psi^*(1\ot\hat x)\rangle \langle g , 1\rangle =
\langle f\ot g , \Psi^*(1\ot\hat x)\ot 1\rangle
\end{eqnarray*}
for all $f,g\in\LO$.
Hence we have $$\Gamma(\Psi^*(1\ot\hat x)) = \Psi^*(1\ot\hat x)\ot 1,$$
 which implies that $\Psi^*(1\ot\hat x) \in\C1$.
\end{proof}
\begin{theorem}\label{33393}
For a locally compact quantum group $\G$, the following are equivalent:
\begin{itemize}
\item[(1)]
$\LO$ is a projective $\T_\star(\G)$-module;
\item[(2)]
$\G$ is discrete.
\end{itemize}
\end{theorem}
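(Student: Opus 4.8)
The plan is to deduce both implications from the splitting criterion of Theorem~\ref{33383}, so throughout I regard $\LO$ as a right $\T_\star(\G)$-module via $f\triangleleft\rho=f\star\pi(\rho)$, with $\mathfrak{m}:\LO\tp\T_\star(\G)\ra\LO$ the associated action map. Before applying Theorem~\ref{33383} in either direction one must observe that $\LO$ is \emph{essential}: since $f\triangleleft\rho$ depends only on $\pi(\rho)\in\LO$ and $\pi$ is onto, one has $\LO\triangleleft\T_\star(\G)=\overline{\LO\star\LO}$, and it is standard that the latter coincides with $\LO$ for every locally compact quantum group. This is the one point to record carefully at the outset.

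For $(2)\Rightarrow(1)$, assume $\G$ is discrete. Then $\LO$ has an identity $e$ by Proposition~\ref{215}, and Corollary~\ref{3311} furnishes an isometric algebra homomorphism $\Phi:\LO\ra\T_\star(\G)$ with $\pi\circ\Phi=\id_\LO$. I would take as splitting the map $\psi:\LO\ra\LO\tp\T_\star(\G)$, $\psi(f)=e\ot\Phi(f)$, which is a morphism since $\|e\|=1$ and $\Phi$ is completely contractive. That $\psi$ is a module map is exactly where Lemma~\ref{90min} enters:
\[
\psi(f\triangleleft\rho)=e\ot\Phi\bigl(f\star\pi(\rho)\bigr)=e\ot\bigl(\Phi(f)\star\Phi(\pi(\rho))\bigr)=e\ot\bigl(\Phi(f)\star\rho\bigr)=\psi(f)\triangleleft\rho,
\]
using first that $\Phi$ is multiplicative and then Lemma~\ref{90min}. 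Finally $\mathfrak{m}\circ\psi(f)=e\star\pi(\Phi(f))=e\star f=f$, so $\mathfrak{m}\circ\psi=\id_\LO$ and $\LO$ is projective by Theorem~\ref{33383}.

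For the converse $(1)\Rightarrow(2)$, projectivity together with Theorem~\ref{33383} produces a $\T_\star(\G)$-module morphism $\Psi:\LO\ra\LO\tp\T_\star(\G)$ with $\mathfrak{m}\circ\Psi=\id_\LO$. Lemma~\ref{91min} then gives $\Psi^*(1\ot\hat x)\in\C1$ for all $\hat x\in\LLL$, so there is a linear functional $\hat f$ on $\LLL$ determined by $\Psi^*(1\ot\hat x)=\hat f(\hat x)1$. I would first check that $\hat f$ is a \emph{normal state}. Normality is inherited from $\Psi^*$ (the adjoint of a map between preduals, hence weak$^*$ continuous) and from the normal embedding $\hat x\mapsto 1\ot\hat x$. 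Unitality follows from the splitting: since $\mathfrak{m}^*=\Gamma|_\LL$ and $\Gamma(1)=1\ot1$, one has $\Psi^*(1\ot1)=\Psi^*(\Gamma(1))=1$, whence $\hat f(1)=1$; in particular $\Psi^*$ is unital and completely contractive, hence completely positive, which forces $\hat f\geq0$. The goal is then to upgrade this to a \emph{normal left invariant} state on $\LLL$, for $\hat\G$ is compact precisely when such a state exists (\cite[Proposition~3.1]{B-T}), and compactness of $\hat\G$ is discreteness of $\G$.

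The main obstacle is exactly this left invariance, i.e.\ the passage from the information carried by $\Psi$—which through Lemma~\ref{91min} only sees the coproduct $\Gamma$ of $\G$—to the statement $\hat\omega\star\hat f=\langle\hat\omega,1\rangle\hat f$ governed by the \emph{dual} coproduct $\hat\Gamma$. I expect to close this gap exactly as in the implication $(3)\Rightarrow(4)$ of Theorem~\ref{3310}: the natural target is a normal conditional expectation $E:\B(\LT)\ra\LL$ with $E(\LLL)\subseteq\C1$ (condition~(3) there), restricting on $\LLL$ to $\hat x\mapsto\hat f(\hat x)1$; placing $E$ in the class $\mathcal{CB}^{\sigma,\LLL}_\LL(\B(\LT))$ and invoking \cite[Theorem~4.5]{J-N-R} realises $\hat f$ through a right centralizer $\hat m$ of $L^1(\hat\G)$ with $\hat m(\hat\omega)=\langle\hat\omega,1\rangle\hat f$, from which left invariance is immediate. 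The delicate point is that the naive candidate $E=\Psi^*(1\ot\,\cdot\,)$ need not restrict to the identity on $\LL$, so one cannot simply declare it a conditional expectation; the real work lies in building the genuine expectation (or, alternatively, in reading off left invariance directly from the linking formula~(\ref{formmula}) of Theorem~\ref{223}, which is precisely what records the interaction between $\Gamma$ and $\hat\Gamma$ on $\Tr$).
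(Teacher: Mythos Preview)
Your $(2)\Rightarrow(1)$ is correct and is exactly the paper's argument, including the use of Lemma~\ref{90min} to check the module property of $\psi$.

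For $(1)\Rightarrow(2)$ you correctly set up the splitting $\Psi$, correctly invoke Lemma~\ref{91min}, and correctly diagnose the obstacle---the naive $E=\Psi^*(1\ot\,\cdot\,)$ need not be the identity on $\LL$---but you then stop. The sentence ``the real work lies in building the genuine expectation'' is an admission, not a proof, and the alternative you float (extracting left invariance from formula~(\ref{formmula})) does not provide any mechanism to control $\Psi^*(1\ot x)$ for $x\in\LL$. So this direction has a genuine gap: the missing idea is the construction of the expectation.

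The paper fills the gap with the unitary antipode. Extend $R$ to all of $\B(\LT)$ by $x\mapsto\hat Jx\hat J$, so that $R(\LLLL)=\LLL$, and set
\[
T:=\Psi^*\circ\chi\circ(R\ot R)\circ\Gamma:\B(\LT)\ra\LL.
\]
On $\LL$ one has $\chi\circ(R\ot R)\circ\Gamma=\Gamma\circ R$, hence $T|_\LL=\Psi^*\circ\Gamma\circ R=R$ (using $\Psi^*\circ\mathfrak m^*=\id$ and $\mathfrak m^*=\Gamma|_\LL$, which you already observed), so $T^2|_\LL=\id$. On $\LLLL$ one has $\Gamma(\hat x')=\hat x'\ot1$ (since $V\in\LLLL\vtp\LL$), so $T(\hat x')=\Psi^*(1\ot R(\hat x'))$ with $R(\hat x')\in\LLL$, which lies in $\C1$ by Lemma~\ref{91min}; thus $T^2(\LLLL)\subseteq\C1$ as well. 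Therefore $E:=T^2$ is a normal conditional expectation onto $\LL$ with $E(\LLLL)\subseteq\C1$, and the \emph{right} version of Theorem~\ref{3310} (Remark~\ref{92min}) gives discreteness. The antipode is precisely what converts the uncontrolled restriction $\Psi^*(1\ot\,\cdot\,)|_\LL$ into a map with a computable square; without it your outline does not close.
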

\begin{proof}$(1)\Rightarrow(2):$ Assume that $\LO$ is a projective $\T_\star(\G)$-module. 
So, there exists a $\T_\star(\G)$-module morphism
$\Psi:\LO\ra \LO\tp\T_\star(\G)$ such that 
$$\mathfrak{m}\circ\Psi = \id_\LO.$$
Let $R$ be the unitary antipode of $\G$ (cf. \cite{K-V}). Then $R(x) = \hat J x\hat J$ for
all $x\in\LL$, where $\hat J$ is the modular conjugate associated with the dual Haar weight $\hat\fee$.
Using the same formula 
$$x\mapsto \hat J x\hat J \ \ \ \ \  \big(x\in\B(\LT)\big),$$
we can extend the map $R$ to $\B(\LT)$. 
Then it is clear that 
$$R(\LLLL) = \LLL.$$
Denote by $\chi$ the flip map $a\ot b\mapsto b\ot a$, and define the map
$$T:\B(\LT) \ra \LL, \ \ T:= \Psi^*\circ\chi\circ(R\ot R)\circ\Gamma.$$
We shall prove that the map $E:=T^2$ satisfies the conditions of (the right version of) part (3) of Theorem \ref{3310}
(see the Remark \ref{92min}).

First note that $T$ is normal and contractive.
Moreover, for $x\in \LL$, we have
\[
 T(x) = \Psi^*\circ\chi\circ(R\ot R)\circ\Gamma(x) = \Psi^*\circ\Gamma(R(x)) = R(x).
\]
This implies that
$$T^2_{|\LL}=\id,$$
and so $E^2 = E$. Hence, $E:\B(\LT)\ra\LL$ is a normal conditional expectation on $\LL$.
Now, for all $\hat x'\in\LLLL$ we have
\begin{eqnarray*}
 T(\hat x') &=& \Psi^*\circ\chi\circ(R\ot R)\circ\Gamma(\hat x')\\ &=&
\Psi^*\circ\chi\circ(R\ot R) (\hat x'\ot 1)\\ &=&
\Psi^*\circ\chi (R(\hat x')\ot 1) \\ &=&
\Psi^*(1\ot R(\hat x')).
\end{eqnarray*}
Hence, $E(\LLLL)\subseteq\C1$, by Lemma \ref{91min}, and so $\G$ is discrete by (the right version of) Theorem \ref{3310}.\\
$(2)\Rightarrow(1):$ Let $e\in\LO$ be the identity element,
and $\Phi:\LO\ra\T_\star(\G)$, as in Corollary \ref{3311}.
Define the map $\Psi:\LO\ra\LO\tp\T_\star(\G)$ by
\[
\Psi(f) = e\ot\Phi(f) \ \ \ \ (f\in\LO).
\]
Since $\pi\circ\Phi = \id_\LO$, we have $\mathfrak{m}\circ\Psi = \id_\LO$.
Moreover, using Lemma \ref{90min}, we have
\begin{eqnarray*}
\Psi(f\triangleleft\rho) &=& e\ot\Phi(f\triangleleft\rho) 
= e\ot \Phi(f\star\pi(\rho))\\
& =& e\ot\big(\Phi(f)\star\Phi(\pi(\rho))\big)
= e\ot\big(\Phi(f)\star\rho\big)\\
& =& \Psi(f)\triangleleft\rho
\end{eqnarray*}
for all $f\in\LO$ and $\rho\in\T_\star(\G)$. Therefore $\Psi$ is a morphism,
and so $\LO$ is projective.
\end{proof}

The next theorem was proved for the case of Kac algebras in  \cite[Theorem 6.6.1]{E-S},
but the proof in there is based on the
structure theory of discrete Kac algebras. Here we present a different
argument for the general case of locally compact quantum groups.
\begin{theorem} \label{228}
If $\G$ is both compact and discrete, then $\G$ is finite (dimensional).
\end{theorem}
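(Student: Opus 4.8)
The plan is to show that $\LL$ (equivalently $\CZ$) is finite-dimensional; since a locally compact quantum group whose underlying von Neumann algebra is finite-dimensional is finite, this is exactly the assertion. I would extract one structural input from each hypothesis. From discreteness I would use that $\G$ discrete means the reduced $C^*$-algebra is an orthogonal $c_0$-direct sum of full matrix algebras, $\CZ \cong c_0\text{-}\bigoplus_{\alpha\in I} \B(H_\alpha)$ with each $\dim H_\alpha<\infty$; in the language of the paper this atomic picture is the operator-algebraic counterpart of Proposition \ref{215}, the unit of $\LO$ being the (now normal) counit $\epsilon$. From compactness I would use the equally standard fact that $\G$ is compact precisely when $\CZ$ is unital, i.e.\ $1\in\CZ$, which is consistent with the Haar state $\varphi\in\LO$ provided by Proposition \ref{217}.

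First I would combine the two descriptions directly. The multiplier unit of $c_0\text{-}\bigoplus_{\alpha\in I} \B(H_\alpha)$ is the family $(1_{H_\alpha})_{\alpha\in I}$, and each summand has $\|1_{H_\alpha}\|=1$; such a family lies in the $c_0$-sum itself exactly when the index set $I$ is finite. Hence unitality of $\CZ$ forces $|I|<\infty$, and then $\CZ=\bigoplus_{\alpha\in I}\B(H_\alpha)$ is a finite direct sum of finite-dimensional matrix algebras, so $\CZ=\LL$ is finite-dimensional and $\G$ is finite. It is worth noting that discreteness alone does \emph{not} suffice here: the counit is a single element of $\LO$ even when $I$ is infinite, so the finiteness must be driven by compactness, precisely through the requirement that the unit lie in the $c_0$-part.

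The main obstacle is that this shortcut still quietly invokes the structural classification of discrete quantum groups, which is the route via the structure theory of discrete Kac algebras that the authors explicitly wish to avoid. To stay inside the paper's functional-analytic framework I would instead argue as follows, and this is the delicate part. Compactness gives a faithful normal Haar state $\varphi$, whose GNS vector $\Omega\in\LT$ is cyclic and separating for $\LL$, while discreteness gives the normal counit $\epsilon$ as the unit of $\LO$ (Proposition \ref{215}); from the multiplicativity relation $\epsilon(x^*x)=|\epsilon(x)|^2$ and Cauchy--Schwarz one obtains a unit vector $\zeta$ with $\LL\zeta=\C\zeta$. Dualizing ($\G$ discrete $\Leftrightarrow$ $\hat\G$ compact, $\G$ compact $\Leftrightarrow$ $\hat\G$ discrete) yields the mirror data for $\LLL$. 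The crux is to convert the simultaneous presence of a cyclic--separating vector and a one-dimensional joint eigenvector for $\LL$ into finite-dimensionality of $\LT$; this hinges on identifying the Haar vector of $\G$ with the counit eigenvector of $\hat\G$ through the fundamental unitary $W\in\LL\vtp\LLL$ and the invariance identity, and I expect the conditional expectation $E$ with $E(\LLL)\subseteq\C1$ from Theorem \ref{3310} to be the right device for forcing this collapse.
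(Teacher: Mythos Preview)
Your first argument is already a correct and complete proof: for a discrete quantum group one has $C_0(\G)\cong c_0\text{-}\bigoplus_{\alpha\in I}\B(H_\alpha)$ with each $H_\alpha$ finite-dimensional, compactness forces $C_0(\G)$ to be unital, and the identity $(1_{H_\alpha})_\alpha$ lies in the $c_0$-sum only when $I$ is finite. There is no gap there, and you should not have abandoned it. Your worry that this ``quietly invokes the structural classification'' is overstated: the $c_0$-direct-sum picture is established for arbitrary discrete locally compact quantum groups (not only Kac algebras), so the argument is valid in the generality claimed; it simply imports a standard structure theorem rather than working inside the paper's own toolkit.

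The paper takes a genuinely different, purely Banach-algebraic route: compactness gives that $L^1(\G)$ is a two-sided ideal in its bidual $L^1(\G)^{**}$ (with the Arens product), discreteness gives that $L^1(\G)$ is unital, and a unital ideal must be the whole algebra, so $L^1(\G)$ is reflexive and hence $L^\infty(\G)$ is finite-dimensional. This avoids any $C^*$-structure theory for discrete quantum groups and stays at the level of second-dual arguments, which is the authors' point. Your argument is shorter and more transparent if one is willing to quote the discrete structure theorem; theirs is more self-contained relative to the sources already cited in the paper.

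By contrast, your second approach is not a proof. You produce a cyclic--separating vector from the Haar state and a fixed vector from the counit, but you never explain how these two pieces of data force $\dim L^2(\G)<\infty$; the final sentence (``I expect the conditional expectation $E$ \dots\ to be the right device'') is a hope, not an argument. If you want to pursue that line you would need an actual mechanism linking the fixed vector to the cyclic vector, and none is given. Drop it and submit the first argument.
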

\begin{proof} If $\G$ is compact, then $L^1(\G)$ is an ideal in
 $L^1(\G)^{**}$ with the left (equivalently, right) Arens product,
 by \cite[Theorem 3.8]{Run}. But since $\G$ is also
 discrete, $L^1(\G)$ is unital, and its unit is obviously also
 an identity element for the left Arens product of $L^1(\G)^{**}$.
Being a unital ideal (via the canonical embedding), $L^1(\G)$ must be equal to
$L^1(\G)^{**}$. So $L^1(\G)$ is reflexive, hence $\LL$ is, which
implies that $\LL$ is finite-dimensional, by \cite[Proposition 1.11.7]{L}.
\end{proof}

Using Theorem \ref{228}, we can now follow a similar idea as the proof of
\cite[Theorem 3.7]{Pir}, to prove a quantum version of the latter.
\begin{theorem}\label{124312}
Let $\G$ be a co-amenable locally compact quantum group. Then the following are equivalent:
\begin{itemize}
 \item[(1)]
 $\T_\star(\G)$ is biprojective;
\item[(2)]
 $\G$ is finite.
\end{itemize}
\end{theorem}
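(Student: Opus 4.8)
The plan is to prove $(1)\Rightarrow(2)$ by showing that biprojectivity of $\T_\star(\G)$ forces $\G$ to be \emph{both} discrete and compact, and then to invoke Theorem~\ref{228} to conclude that $\G$ is finite; the converse $(2)\Rightarrow(1)$ is the routine finite-dimensional direction. Throughout I would use the diagonal description of biprojectivity: $\T_\star(\G)$ is biprojective precisely when there is a $\T_\star(\G)$-bimodule morphism $\theta\colon\T_\star(\G)\to\T_\star(\G)\tp\T_\star(\G)$ splitting the product map, $\mathfrak{m}\circ\theta=\id$. The two topological conclusions will be extracted by pushing this single diagonal forward along the two canonical algebra homomorphisms out of $\T_\star(\G)$, namely the quotient map $\pi\colon\T_\star(\G)\to\LO$ and the character $tr\colon\T_\star(\G)\to\C$ (Proposition~\ref{211}), and then feeding the resulting splittings into Theorem~\ref{33393} (for discreteness) and Proposition~\ref{222} (for compactness) via the splitting criterion of Theorem~\ref{33383}.

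For discreteness, I would set $\Theta:=(\pi\tp\id)\circ\theta$, a right-module morphism $\T_\star(\G)\to\LO\tp\T_\star(\G)$ satisfying $\mathfrak{m}\circ\Theta=\pi$ (using that $\pi$ is a homomorphism). The key point is that $\Theta$ descends to $\LO$. Since $\Gamma(\B(\LT))\subseteq\B(\LT)\vtp\LL$, the convolution $\eta\star\rho$ depends on $\rho$ only through $\pi(\rho)$, so $\ker\pi$ is a right annihilator: $\eta\star\rho=0$ for all $\eta$ whenever $\rho\in\ker\pi$. The \emph{left}-module property of $\theta$ then gives $\eta\triangleright\theta(\rho)=\theta(\eta\star\rho)=0$ for all $\eta$, and a slicing argument — using the weak$^*$-density in $\LL$ of the right slices $\{(\eta\ot\id)\Gamma(x):\eta\in\Tr,\,x\in\LL\}$ — shows that any element annihilated by all left actions lies in $\ker(\pi\tp\id)$. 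Hence $\Theta$ vanishes on $\ker\pi$ and factors as $\Theta=\Psi\circ\pi$ for a right-module morphism $\Psi\colon\LO\to\LO\tp\T_\star(\G)$ with $\mathfrak{m}\circ\Psi=\id_\LO$. Since $\LO$ is essential (co-amenability gives a bounded approximate identity), Theorem~\ref{33383} makes $\LO$ projective, so $\G$ is discrete by Theorem~\ref{33393}.

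For compactness I would push forward along the character: $\Theta':=(tr\tp\id)\circ\theta$, which maps into $\C\tp\T_\star(\G)$, identified with $\T_\star(\G)$ so that the action map becomes $tr$; it is a right-module morphism with $tr\circ\Theta'=tr$. The left-module property of $\theta$ now yields $\Theta'(\alpha\star\beta)=tr(\alpha)\,\Theta'(\beta)$. Here co-amenability enters: by Proposition~\ref{216}, $\T_\star(\G)$ has a bounded right approximate identity $(e_i)$, and choosing $\alpha$ with $tr(\alpha)=1$ in $\Theta'(\alpha\star e_i)=tr(\alpha)\,\Theta'(e_i)$ shows that $\Theta'(e_i)\to\Theta'(\alpha)=:u$ in norm and that $\Theta'(\beta)=tr(\beta)\,u$; the right-module property then gives $u\star\zeta=tr(\zeta)\,u$ and $tr(u)=1$. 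This is exactly a splitting exhibiting $\C$ as a projective $\T_\star(\G)$-module, so $\G$ is compact by Proposition~\ref{222}. Being both discrete and compact, $\G$ is finite by Theorem~\ref{228}.

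For $(2)\Rightarrow(1)$ I would use that when $\G$ is finite all spaces are finite-dimensional, so the projective tensor products are algebraic and every morphism is admissible; one then writes down $\theta$ explicitly from the two structures now available — the splitting homomorphism $\Phi\colon\LO\to\T_\star(\G)$ of Corollary~\ref{3311} (discreteness) and the invariant element $u$ (compactness) — and verifies the bimodule identities by direct computation. I expect the main obstacle to be the discreteness step, specifically showing that the pushed-forward diagonal $\Theta$ factors through $\pi$: this hinges on identifying the space of elements killed by all left actions with $\ker(\pi\tp\id)$, which requires both the weak$^*$-density of the right slices of $\Gamma$ and the exactness of the operator space projective tensor product under the complete quotient map $\pi\tp\id$.
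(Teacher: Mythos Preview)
Your argument is correct, but it takes a different route from the paper's. For $(1)\Rightarrow(2)$ the paper simply invokes Helemskii's general result \cite[7.1.60]{Hel}: over a biprojective algebra with a bounded one-sided approximate identity, every essential module is projective. Since co-amenability gives $\T_\star(\G)$ a bounded right approximate identity (Proposition~\ref{216}) and both $\LO$ and $\C$ are essential, they are projective, and Theorem~\ref{33393} and Proposition~\ref{222} finish the job. You instead reprove this transfer principle by hand in the two specific cases, pushing the bimodule diagonal $\theta$ forward along $\pi$ and along $tr$. Your approach is more self-contained and makes visible exactly how the two-sided module structure of $\theta$ is used (the \emph{left}-module property is what forces the pushed-forward maps to descend), at the cost of the technical verification you flag. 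That verification does go through: since $\Tr\tp\Tr\cong(\B(\LT)\vtp\B(\LT))_*$ for von Neumann algebra preduals, $\ker(\pi\ot\id)$ is precisely the annihilator of $\LL\vtp\B(\LT)$, and your slicing argument shows $\theta(\ker\pi)$ lands there. For $(2)\Rightarrow(1)$ the paper again appeals to external machinery, namely \cite[Lemma~4.2]{Pir} applied to the short exact sequence $0\to\ker\pi\to\T_\star(\G)\to\LO\to 0$ together with operator biprojectivity of $\LO$ for finite (hence compact Kac) $\G$; your direct finite-dimensional construction is an acceptable alternative, though you should note that all the required contractivity bounds (needed because the paper works with completely contractive morphisms) follow from the norm-one choices available in the finite case.
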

\begin{proof} $(1)\Rightarrow(2):$ Since $\G$ is co-amenable, Proposition \ref{216} implies that $\T_\star(\G)$ has a bounded right approximate identity.
Since $\LO$ and $\C$ are both essential $\T_\star(\G)$-modules, they are $\T_\star(\G)$-projective by 
 \cite[7.1.60]{Hel}, which implies that $\G$ is both compact and discrete, by
Proposition \ref{222} and Theorem \ref{33393}. Hence, $\G$ is finite by Theorem \ref{228}.\\
$(2)\Rightarrow(1):$ Consider the short exact sequence 
$$0\ra I \ra \T_\star(\G) \xrightarrow{\pi} \LO \ra 0,$$
 where $$I:= \{\rho\in\Tr: \rho_{|\LL} = 0\}.$$
Since $\G$ is finite, it is in particular a compact Kac algebra, 
and so $\LO$ is operator biprojective.
Hence, (1) follows from \cite[Lemma 4.2]{Pir}.
\end{proof}

We can also define a right $\LO$-module structure on $\Tr$, as follows:
\begin{equation}\label{mod}
 \rho\triangleleft f := (\rho\otimes f)\circ\Gamma \ \ \ \ (\rho\in\Tr, f\in\LO).
\end{equation}

\begin{theorem}
For a locally compact quantum group $\G$, the following are equivalent:
\begin{itemize}
\item[(1)]
there exists an isometric $\LO$-module map $\Phi:\LO\ra\Tr$ such that $\pi\circ\Phi=\id_\LO$;
\item[(2)]
$\G$ is discrete.
\end{itemize}
\end{theorem}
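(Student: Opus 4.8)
The plan is to prove this via the equivalence between the two notions of module structure on $\Tr$ and the existence of a suitable conditional expectation, exactly paralleling Corollary~\ref{3311}. The key difference here is that the module structure is the genuine $\LO$-action of (\ref{mod}), $\rho\triangleleft f = (\rho\ot f)\circ\Gamma$, rather than the $\T_\star(\G)$-action coming from the quotient map $\pi$. For the implication $(2)\Rightarrow(1)$, I would reuse the discrete-group construction: when $\G$ is discrete, let $e\in\LO$ be the identity and $\tilde e\in\Tr$ a norm-preserving weak$^\ast$-extension, and take $\Phi$ to be the pre-adjoint of the conditional expectation $E(x) = (\tilde e\ot\id)\Gamma(x)$ from the proof of $(4)\Rightarrow(2)$ in Theorem~\ref{3310}. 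One checks that $\pi\circ\Phi=\id_\LO$ holds by the same computation as before, and that $\Phi$ intertwines the $\LO$-actions; the latter amounts to verifying $\Phi(f\triangleleft g) = \Phi(f)\triangleleft g$, which should reduce to coassociativity of $\Gamma$ together with the identity $\langle\tilde e\ot f,\,x\rangle=\langle f,x\rangle$ on $\LL$.

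For the harder direction $(1)\Rightarrow(2)$, I would pass to adjoints and aim to produce a conditional expectation satisfying the hypotheses of Theorem~\ref{3310}. Given an isometric $\LO$-module map $\Phi:\LO\ra\Tr$ with $\pi\circ\Phi=\id_\LO$, the adjoint $\Phi^*:\B(\LT)\ra\LL$ is a normal norm-one projection onto $\LL$, hence a normal conditional expectation. The essential point is to extract from the module-map property of $\Phi$ a covariance relation for $\Phi^*$. Dualizing $\Phi(\rho\triangleleft f)=\Phi(\rho)\triangleleft f$ and unwinding the definition (\ref{mod}) of the action on both sides, I expect to obtain an intertwining identity of the form $\Gamma\circ\Phi^* = (\Phi^*\ot\id)\circ\Gamma$ (or its right-handed analogue), which is precisely condition~(2) of Theorem~\ref{3310}; discreteness of $\G$ then follows immediately from that theorem.

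The main obstacle I anticipate is the bookkeeping in this dualization: the two products $\triangleleft$ on the source $\LO$ and on the target $\Tr$ are defined by pairing against $\Gamma$ on different spaces, so translating $\Phi(\rho\triangleleft f)=\Phi(\rho)\triangleleft f$ into a clean statement about $\Gamma\circ\Phi^*$ requires carefully tracking which leg of the tensor product each functional acts on. Concretely, for $f,g\in\LO$ and $x\in\B(\LT)$ I would compute $\langle f\ot g,\,\Gamma(\Phi^*(x))\rangle$ two ways—once using that $\Phi^*$ is the adjoint of a module map and once directly—and compare, using weak$^\ast$-density arguments of the type employed throughout Section~3 to conclude the operator identity from its pairings. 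A secondary subtlety is confirming that $\Phi^*$ lands in $\LL$ and is genuinely idempotent; this is where the isometry hypothesis on $\Phi$ (forcing $\|\Phi^*\|=1$) together with $\pi\circ\Phi=\id_\LO$ (forcing $\Phi^*|_{\LL}=\id$) is used, mirroring the argument in Corollary~\ref{3311}.
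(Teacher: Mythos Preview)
Your proposal is correct and follows essentially the same route as the paper: for $(2)\Rightarrow(1)$ you take $\Phi$ to be the pre-adjoint of the conditional expectation $E(x)=(\tilde e\ot\id)\Gamma(x)$ from Theorem~\ref{3310}, and for $(1)\Rightarrow(2)$ you set $E:=\Phi^*$, observe it is a normal conditional expectation onto $\LL$, and dualize the module-map identity $\Phi(h\star g)=\Phi(h)\triangleleft g$ to obtain $\Gamma\circ\Phi^*=(\Phi^*\ot\id)\Gamma$, which is precisely condition~(2) of Theorem~\ref{3310}. The paper's proof is in fact terser than yours, declaring both verifications ``easily seen'' and ``straightforward'' without spelling out the pairing computation you outline.
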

\begin{proof} If $\G$ is discrete, then the predual of the map $E$
constructed in the proof of the implication $(4)\Rightarrow(2)$ in Theorem \ref{3310},
is easily seen to satisfy the desired conditions.

Conversely, if such a map $\Phi$ exists, then it is straightforward to see that the map
$$E:=\Phi^*:\B(\LT)\ra \LL$$ enjoys the properties in part $(2)$ of
Theorem \ref{3310}, and so $\G$ is discrete.
\end{proof}

In the following, we shall consider another important cohomology-type
property for the convolution algebras associated with a locally compact quantum group $\G$, namely amenability.

Next theorem is in fact a generalization of a result due to Hulanicki
who considered the case $\G= L^\infty(G)$ for a discrete group $G$,
to the setting of locally compact quantum groups.
This result was proved in the Kac algebra case by Kraus and Ruan in
\cite[Theorem 7.6]{Kra-Ruan}.
But their argument is based essentially on the fact
that in the Kac algebra setting, 
the left regular representation is a $^*$-homomorphism, which does not hold
anymore in the general setting of locally compact quantum groups, so it appears that their proof
cannot be modified for the latter case.
Here we present a different argument, inspired by the proof of \cite[Theorem 2.4]{Pisi}.
\begin{theorem}\label{230}
Let $\G$ be a co-amenable locally compact quantum group. Then the following are equivalent:
\begin{itemize}
\item [(1)]
the left regular representation $\lambda:L^1(\G) \rightarrow L^\infty(\hat{\G})$
is isometric on $L^1(\G)^+$;
\item [(2)]
$\hat\G$ is co-amenable, i.e., $\G$ has Reiter's property $(P_2)$.
\end{itemize}
\end{theorem}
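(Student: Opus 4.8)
The plan is to prove the equivalence $(1)\Leftrightarrow(2)$ by relating isometry of $\lambda$ on positives to the existence of an invariant state (or mean) on the dual side, exploiting the co-amenability hypothesis on $\G$. Recall that co-amenability of $\hat\G$ is equivalent to saying that the counit $\hat\varepsilon$ on $C_0(\hat\G)$ is bounded, equivalently that there is a state on $C_0(\hat\G)$ acting as identity for the convolution product, equivalently (by \cite[Theorem 3.1]{B-T}) that $L^1(\hat\G)$ has a bounded approximate identity. The key quantity to watch is $\|\lambda(f)\|$ for $f\in L^1(\G)^+$: since $\lambda$ is always a contractive homomorphism, we always have $\|\lambda(f)\|\le\|f\|$, and the content of $(1)$ is the reverse inequality on positive elements.

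First I would establish the direction $(2)\Rightarrow(1)$. Assuming $\hat\G$ co-amenable, one obtains a state $\hat\varepsilon$ on $C_0(\hat\G)$ (the bounded counit) which is multiplicative for the dual convolution and satisfies $\hat\varepsilon(\lambda(f))=\langle f,1\rangle=\|f\|$ for $f\in L^1(\G)^+$, since the counit recovers the total mass. Because $\hat\varepsilon$ is a state, $|\hat\varepsilon(\lambda(f))|\le\|\lambda(f)\|$, so $\|f\|=\hat\varepsilon(\lambda(f))\le\|\lambda(f)\|$, giving the isometry on positives together with the automatic contractivity. Here I would take care to justify that the counit, a priori defined on the reduced $C^*$-algebra $C_0(\hat\G)$, indeed extends to evaluate $\lambda(f)\in L^\infty(\hat\G)$ appropriately, using that $\lambda(f)$ lies in the relevant dense subalgebra and that co-amenability is exactly what makes $\hat\varepsilon$ bounded and hence extendable.

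The substantive direction is $(1)\Rightarrow(2)$, and this is where I expect the main obstacle. Following the strategy of \cite[Theorem 2.4]{Pisi}, the idea is to use the isometry on positives to manufacture an invariant mean, i.e.\ a state $\hat m$ on $L^\infty(\hat\G)$ that witnesses co-amenability of $\hat\G$ through Reiter's property $(P_2)$. Concretely, fix an approximate identity or a net $(f_i)$ of states in $L^1(\G)$ coming from co-amenability of $\G$ (which furnishes a bounded approximate identity in $L^1(\G)$ by Proposition \ref{215}), and consider the vectors implementing $\lambda(f_i)$. The hypothesis $\|\lambda(f_i)\|=\|f_i\|=1$ forces an asymptotic invariance: the isometry on positive elements prevents the norm of $\lambda(f_i)$ from decaying, which classically encodes the Reiter condition that the $L^2$-vectors become asymptotically invariant under the dual translation. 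I would extract a weak$^*$ cluster point of the associated states and verify it is left invariant for $\hat\G$, thereby yielding an invariant mean and hence, via \cite[Proposition 3.1]{B-T} applied to $\hat\G$, co-amenability of $\hat\G$.

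The hard part will be converting the scalar isometry statement $\|\lambda(f)\|=\|f\|$ on $L^1(\G)^+$ into the vector-level asymptotic invariance required for property $(P_2)$, since $\lambda$ is merely a contractive algebra homomorphism and not a $^*$-homomorphism in the general quantum setting — this is precisely the obstruction that defeats the Kac-algebra argument of Kraus and Ruan. To circumvent it, rather than relying on $^*$-preservation I would work directly with the norm attainment: isometry on a positive $f$ means there exist unit vectors $\xi$ with $\langle\lambda(f)\xi,\xi\rangle\to\|f\|$, and I would show these near-maximizing vectors satisfy the Reiter-type estimate $\|V^*(\eta\otimes\xi)-\eta\otimes\xi\|\to 0$ appearing in the proof of Proposition \ref{216}(2), thereby matching the co-amenability criterion of \cite[Theorem 3.1]{B-T} for $\hat\G$. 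The delicate point throughout is handling the positivity and the passage between the $L^1$-norm, the operator norm on $L^\infty(\hat\G)$, and the Hilbert-space geometry, all without the $^*$-homomorphism property.
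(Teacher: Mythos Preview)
Your argument for $(2)\Rightarrow(1)$ via the bounded counit is correct and in fact cleaner than the paper's route. Since $\lambda(f)=(f\otimes\iota)(W)\in C_0(\hat\G)$ for every $f\in L^1(\G)$, and co-amenability of $\hat\G$ gives a bounded character $\hat\varepsilon$ on $C_0(\hat\G)$ with $(\iota\otimes\hat\varepsilon)(W)=1$, one obtains $\hat\varepsilon(\lambda(f))=f(1)=\|f\|$ for $f\ge 0$, whence $\|f\|\le\|\lambda(f)\|$. The paper instead uses the equivalent Reiter-type formulation (a net of unit vectors $\xi_i$ with $\|W(\eta\otimes\xi_i)-\eta\otimes\xi_i\|\to 0$) and computes $\langle\lambda(f)\xi_i,\xi_i\rangle\to\|f\|$; both arguments are valid.

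For $(1)\Rightarrow(2)$ there is a real gap in your outline. You propose to pick ``near-maximizing vectors $\xi$ with $\langle\lambda(f)\xi,\xi\rangle\to\|f\|$'' from the hypothesis $\|\lambda(f)\|=\|f\|$. But $\|\lambda(f)\|=\|f\|$ only yields unit vectors with $\|\lambda(f)\xi_n\|\to\|f\|$; passing to $\langle\lambda(f)\xi_n,\xi_n\rangle\to\|f\|$ would require the numerical radius of $\lambda(f)$ to equal its norm, which in turn would need $\lambda(f)$ to be (essentially) normal --- and that is exactly the $^*$-preservation you correctly flag as unavailable outside the Kac setting. Likewise, your invariant-mean route needs the functional $\lambda(f)\mapsto f(1)$ to be bounded on $\lambda(L^1(\G))$ for \emph{all} $f$, not just positives, and you do not indicate how to obtain this without a Jordan-type decomposition compatible with $\lambda$.

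The paper's fix, following Pisier, is to first upgrade the hypothesis to
\[
\|1+\lambda(f)\|=1+\|f\|\qquad(f\in L^1(\G)^+),
\]
which is where co-amenability of $\G$ is actually used: a bounded approximate identity $(e_\alpha)$ in $L^1(\G)^+$ lets one write $1+\lambda(f)$ as a limit of $\lambda(e_\alpha+f\star g)$ with positive argument, so the isometry hypothesis applies. Once this is in hand, for any finite set $F\subset L^1(\G)^+_1$ one has $\bigl\|1+\sum_{f\in F}\lambda(f)\bigr\|=1+|F|$, and choosing unit vectors $\xi_n$ nearly attaining this norm forces each summand to satisfy $\|\lambda(f)\xi_n+\xi_n\|\to 2$, hence $\|\lambda(f)\xi_n-\xi_n\|\to 0$ by the parallelogram identity (using only $\|\lambda(f)\|\le 1$). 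A diagonal argument over finite sets then produces a single net $(\xi_i)$ with $\|\lambda(f)\xi_i-\xi_i\|\to 0$ for all normalized positive $f$, which is the $(P_2)$ condition for $\hat\G$. Your sketch does not contain this ``$1+\lambda(f)$'' step, and without it the passage from operator norm to diagonal matrix coefficients --- the very obstacle you identify --- remains unresolved.
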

\begin{proof} $(1)\Rightarrow(2):$ 
We first show that (1) implies that 
$$\|1+\lambda(f)\| = 1 + \|f\|$$
for all $f\in\LO^+$. To show this, 
let $(e_\alpha)\in \LO^+$ be a bounded approximate identity, and $g\in\LO^+$ with $\|e_\alpha\| = \|g\| = 1$ for 
all $\alpha$. Then we have
\begin{eqnarray*}
 \|f\| + 1 &=& f(1) + 1 = f(1)g(1)+e_\alpha(1)g(1) \\ &=& \langle f\star g + e_\alpha\star g , 1 \rangle
 = \| f\star g + e_\alpha\star g\|\\
&\rightarrow & \| f\star g+g\|  = \| \lambda(f\star g) + \lambda(g)\|\\ 
&=& \| (\lambda(f) + 1)\lambda(g)\|
 \leq \|\lambda(f) + 1\|\, \|\lambda(g)\|\\ &=& \|\lambda(f) + 1\|
 \leq \|\lambda(f)\| + 1 = \|f\| + 1 ,
\end{eqnarray*}
which implies our claim. 
Since $\G$ is co-amenable, there exists $\varepsilon\in M(\G)^+_1$ such that 
$\lambda(\varepsilon) = 1$, by \cite[Theorem 3.1]{B-T}. Let
$$\mathcal{F}_{0} = \{F\cup\{\varepsilon\} : F\subseteq \LO^+_{1}, \ \text{ $F$ is finite}\}.$$
Then for each $F\in \mathcal{F}_{0}$ we have
$$\big\|\sum_{f\in F}f\big\| = \langle\sum_{f\in F} f  , 1\rangle = |F|.$$
So $\big\|\sum_{f\in F} \lambda(f)\big\| = |F|$, and therefore there exists a
sequence $(\xi_{n})$ of unit vectors in $L^2(\G)$ such that
$$\lim_n\big\|\sum_{f\in F}\lambda(f)\xi_{n}\big\| = |F|.$$
Now fix $f_{0}\in F$, and let $F' = F\backslash\{\varepsilon,f_{0}\}$.
Then we have 
$$\lim_n\big\|(\lambda(f_{0})\xi_{n} + \xi_{n}) + \sum_{f\in F'}\lambda(f)\xi_{n}\big\| = |F|,$$
but since
$$\big\|\sum_{f\in F'} \lambda(f)\xi_{n}\big\| \leq |\,F\,| - 2 \ \  \ \ \forall n\in\field{N},$$
it follows that $\lim_n\|\lambda(f_{0})\xi_{n} + \xi_{n}\|= 2$, which yields
$$\lim_n\|\lambda(f_{0})\xi_{n} - \xi_{n}\|_{2} = 0.$$
Since both $f_{0}\in F$ and $F\in F_{0}$ were arbitrary, there
exists a net $(\xi_{i})$ of unit vectors in $L^2(\G)$ such that
$$\|\lambda(f)\xi_{i} - \xi_{i}\| \rightarrow 0$$
for all $f \in\LO^+_{1}$,
and since $L^\infty(\G)$ is standard on $L^2(\G)$ we have
$$\|W(\eta\otimes\xi_{i}) - \eta\otimes\xi_{i}\| \ra 0$$
for all unit vectors $\eta\in L^2(\G)$. 
Hence, $\hat{\G}$ is co-amenable, by \cite[Theorem 3.1]{B-T}.\\
$(2)\Rightarrow(1):$ Let $\chi$ denote the flip map $a\ot b\mapsto b\ot a$.
Since $\hat{\G}$ is co-amenable and $\chi(W)$ is
an isometry, \cite[Theorem 3.1]{B-T} ensures the existence of a
net $(\xi_{i})$ of unit vectors in $L^2(\G)$ such that
$$\lim_i\|W(\eta\otimes \xi_i) - \eta\otimes \xi_i\|_2 = 0 , \ \ \ \ \forall \eta\in \LT.$$
Now, let $f\in L^1(\G)^+$. Since $L^\infty(\G)$ is in standard form in $\B(L^2(\G))$,
we have $f = \omega_{\zeta}$, for some $\zeta\in L^2 (\G)$ with $\|f\| =
\|\zeta\|$. Assuming $\|\lambda(f)\|\leq1$, we obtain:
\begin{eqnarray*}
1 &\geq& \lim_i |\langle\lambda(f)\xi_{i},\xi_{i}\rangle|
 = \lim_i| \langle(f\otimes \iota)W\xi_{i},\xi_{i}\rangle|\\
& = & \lim_i| \langle W(\zeta\otimes \xi_i),\zeta\otimes \xi_i\rangle| 
= \lim_i\|\zeta\otimes \xi_i\|^2\\
&=& \|\zeta\|^2 \ =\ \|f\|^2.
\end{eqnarray*}
So, $\|\lambda(f)\|\leq 1$ implies $\|f\| \leq 1$, therefore the conclusion follows.

The equivalence between co-amenability of $\hat \G$ and Reiter's property $(P_2)$ of
$\G$ is the statement of \cite[Theorem 5.4]{VolMatt}.
\end{proof}
\begin{remark}\label{231}
Note that the assumption of co-amenability of $\G$ is not necessary for 
the implication $(2)\Rightarrow(1)$.
Also, this condition is not necessary if $\G$ is a Kac algebra,
as an easy modification of our argument shows.
\end{remark}

\section{The Radon--Nikodym Property for $\LO$}
In the last part of this paper we shall investigate a geometric property 
for the convolution algebra $\LO$ of a locally compact quantum group $\G$,
namely the Radon--Nikodym property (in short: RNP).

The following are some well-known results concerning the Radon--Nikodym Property of Banach spaces (cf. \cite{Diest2}).
\begin{proposition}\label{024} ${}$
 \begin{enumerate}\item
The RNP is inherited by closed subspaces, and is stable under isomorphisms.
\item
If $H$ is a Hilbert space then $\mathcal{T}(H)$ has the RNP.
\item
Let $G$ be a locally compact group. Then $L^1(G)$ has the RNP if and only if $G$ is discrete.
\end{enumerate}
\end{proposition}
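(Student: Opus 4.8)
The plan is to treat the three assertions separately, working throughout from the vector-measure formulation: a Banach space $X$ has the RNP if, for every finite measure space $(\Omega,\Sigma,\mu)$, every countably additive $X$-valued measure $m$ of bounded variation with $m\ll\mu$ admits a Bochner-integrable density $g\in L^1(\mu;X)$ satisfying $m(A)=\int_A g\,d\mu$ for all $A\in\Sigma$. For part (1), stability under isomorphisms is immediate by transport: if $T:X\to Z$ is an isomorphism and, say, $X$ has the RNP, then a $Z$-valued measure $m$ yields an $X$-valued measure $T^{-1}\circ m$ of bounded variation, whose density $g$ gives $T\circ g$ as a density for $m$; the reverse implication is symmetric. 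For closed subspaces $Y\subseteq X$ with $X$ enjoying the RNP, a $Y$-valued measure is a fortiori $X$-valued, hence has a density $g:\Omega\to X$; the only point requiring care is that $g$ takes values in $Y$ almost everywhere. This follows because $g$ is essentially separably valued while every average $\tfrac{1}{\mu(A)}m(A)$ lies in $Y$, so a standard separability-plus-Hahn--Banach argument places the essential range inside the closed set $Y$.

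For part (2), I would first dispose of separable $H$: then $\mathcal{K}(H)$ is separable, and since $\mathcal{T}(H)=\mathcal{K}(H)^*$ is a separable dual space, it has the RNP by the classical result that separable dual spaces do. For arbitrary $H$ I would invoke the localization principle that a Banach space has the RNP if and only if each of its separable closed subspaces does. Given a separable closed subspace $S\subseteq\mathcal{T}(H)$, pick a countable dense sequence $(T_k)$ in $S$; each $T_k$ has a singular-value decomposition involving countably many vectors, so the closed span $H_0$ of all these vectors is separable. Then every $T_k$ maps into $H_0$ and annihilates $H_0^\perp$, whence $S\subseteq\mathcal{T}(H_0)$ isometrically, reducing to the separable case already handled.

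For part (3), the discrete direction is the easy one: if $G$ is discrete then $L^1(G)=\ell^1(G)$, and $\ell^1(\Gamma)$ has the RNP for any index set $\Gamma$ (each element has countable support, so after localizing as in part (2) one is reduced to the separable dual $\ell^1=c_0^*$). For the converse I would argue contrapositively. If $G$ is not discrete, its Haar measure $\mu$ is non-atomic; choosing a compact set $K$ of positive finite measure, the functions supported on $K$ form a closed subspace of $L^1(G)$ isometric to $L^1(K,\mu|_K)$, and non-atomicity identifies the latter isometrically with $L^1[0,1]$, the prototypical space failing the RNP. By part (1) the failure then passes up to $L^1(G)$.

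The main obstacle I anticipate is precisely this non-discrete direction of part (3): justifying the non-atomic embedding uniformly over all locally compact groups, where Haar measure need not be $\sigma$-finite nor the group second countable. I would handle this by restricting attention to a single compact neighborhood of positive measure and exploiting non-atomicity to carve out, inside it, a measurable set admitting a continuum of dyadic sub-pieces; this reproduces a Rademacher-type bush that directly witnesses the failure of the RNP, sidestepping any global structural hypotheses on $G$.
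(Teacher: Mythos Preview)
Your arguments are correct, and in fact go well beyond what the paper does: the paper offers no proof of this proposition at all, treating it as background material with a blanket citation to Diestel--Uhl, \emph{Vector Measures}. So there is nothing to compare at the level of strategy; you have supplied a self-contained justification where the authors simply invoke the literature.
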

\begin{proposition}\label{218} Let $\G$ be a locally compact quantum group. 
If there exists $f\in L^1(\G)_1$ such that the map
$$\LO\ni\omega\mapsto f\star\omega \in L^1(\G)$$
is isometric, then $L^1(\G)$ has the RNP.
\end{proposition}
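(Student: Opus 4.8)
The plan is to exploit the fact stated in Proposition \ref{024}(2) that the full trace class $\Tr$ has the RNP, together with the fact that the RNP passes to closed subspaces (Proposition \ref{024}(1)). Since $\LO$ is a closed subspace of the trace class space $\Tr$ only after an isometric embedding, the strategy is to manufacture such an embedding out of the hypothesized isometric convolution operator. The key observation is that the map
\[
\Theta:\LO\ra\Tr,\qquad \omega\mapsto f\star\omega,
\]
using the $\T_\star(\G)$-module structure (equivalently the lifted convolution product on $\Tr$ restricted by $f\in\LO\subseteq\Tr$), is by assumption isometric, hence is an isometric isomorphism of $\LO$ onto its image $\Theta(\LO)\subseteq\Tr$.

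First I would verify that $\Theta$ is well-defined and bounded into $\Tr$: this follows because the convolution product $\star$ on $\Tr$ is completely contractive (as noted in Section 3), and $f$, viewed in $\Tr$ via any norm-preserving weak$^\ast$-extension $\tilde f$, gives $\tilde f\star\omega\in\Tr$. One must be a little careful here about whether the target should be literally $\Tr$ or $\LO$; the cleanest route is to regard the left-convolution operator as landing in $\Tr$, so that the image is a subspace of a space known to have the RNP. Next I would observe that $\Theta$ being isometric means $\Theta(\LO)$ is a \emph{closed} subspace of $\Tr$: a linear isometry has closed range precisely when its domain is complete, and $\LO$ is a Banach space, so completeness of $\LO$ transfers to completeness, hence closedness, of $\Theta(\LO)$ inside $\Tr$.

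With the embedding in place, the argument concludes in two quick invocations. By Proposition \ref{024}(2), $\Tr=\mathcal{T}(L^2(\G))$ has the RNP. By Proposition \ref{024}(1), the RNP is inherited by closed subspaces, so the closed subspace $\Theta(\LO)$ has the RNP. Finally, since $\Theta:\LO\ra\Theta(\LO)$ is an isometric (hence isomorphic) surjection and the RNP is stable under isomorphisms (again Proposition \ref{024}(1)), it follows that $\LO$ itself has the RNP.

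The main obstacle is not any deep estimate but rather the bookkeeping of which space the convolution map truly lands in and in what sense it is isometric. Specifically, I expect the delicate point to be reconciling the isometric convolution operator $\omega\mapsto f\star\omega$ written with values in $\LO$ (as in the statement) with the embedding into $\Tr$ needed to apply part (2) of Proposition \ref{024}; one has to confirm that left convolution by an element of $\LO$, when the range is taken inside the larger trace class algebra, remains isometric and has closed range. Once that identification is pinned down, the three structural properties of the RNP close the argument immediately, so the proof is short and the only real content is the construction of the isometric embedding into a trace class space.
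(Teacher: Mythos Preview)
Your approach is essentially the paper's: lift $f$ to a norm-preserving extension $\tilde f\in\Tr$, use the map $\omega\mapsto \tilde f\triangleleft\omega$ (equivalently $\tilde f\star\tilde\omega$, which is independent of the chosen extension $\tilde\omega$ since $\Gamma(\B(\LT))\subseteq\B(\LT)\vtp\LL$) to embed $\LO$ into $\Tr$, and then invoke Proposition~\ref{024}. The one point you flag but do not carry out is exactly what the paper makes explicit: the isometry into $\Tr$ follows from the chain
\[
\|\omega\| \;=\; \|f\star\omega\| \;=\; \|\pi(\tilde f\triangleleft\omega)\| \;\le\; \|\tilde f\triangleleft\omega\| \;\le\; \|\tilde f\|\,\|\omega\| \;=\; \|\omega\|,
\]
using that $\pi$ is a contraction and $\|\tilde f\|=\|f\|=1$. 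Once you write this line, your ``bookkeeping'' obstacle disappears and the proof is complete.
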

\begin{proof} Assume that such $f\in\LO$ exists.
Let $\tilde f\in \Tr$ be a norm-preserving weak$^\ast$ extension of $f$. Then the map 
$$L^1(\G)\ni \omega\mapsto \tilde f\triangleleft\omega \in\T_\star(\G)$$
is an isometric embedding, where the action $\triangleleft$ is defined as \ref{mod}.
 To see this, let $\omega\in\LO$. Then we have 
\[
\|\om\| = \|f\star\om\| = \|\pi(\tilde f\triangleleft\om)\|\leq\|\tilde f\triangleleft\om\|
\leq\|\tilde f\|\,\|\om\| = \|\om\|. 
\]
 This implies that $L^1(\G)$ is isomorphic to a subspace of
$\Tr$, hence the claim follows from parts $(1)$ and $(2)$ of Proposition \ref{024}.
\end{proof}

Part $(3)$ of Proposition \ref{024}, at first glance, suggests that one might have
a dual version of this statement, saying that the Fourier algebra
$A(G)$ has the RNP if and only if $G$ is compact. But in fact, this is not the case.
A counter-example is given by the Fell group (see \cite[Remark 4.6]{Taylor}) which is non-compact, but its Fourier
algebra has the RNP.

Analogously to our earlier discussion on cohomological properties of $\G$, one may need to
take the operator space structure of  $\LO$ into account as well. 
Indeed, there is an operator space version of the RNP, due to Pisier (see \cite{Pisier3}),
which may be useful in this context.

But, in the following we point out another way of looking at this problem.
First, we give a general result.
\begin{theorem} \label{727727}
Let $M\subseteq \B(H)$ be a von Neumann algebra. Then the following are
equivalent:
\begin{itemize}
\item[(1)]
$M_*$ has the RNP;
\item[(2)]
there is a normal conditional expectation from $B(H)$ onto $M$.
\end{itemize}
\end{theorem}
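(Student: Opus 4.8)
The plan is to prove the two implications separately, with the forward direction $(2)\Rightarrow(1)$ being the short one and the reverse direction $(1)\Rightarrow(2)$ carrying the real content. For $(2)\Rightarrow(1)$, I would argue as follows. Suppose $E:\B(H)\ra M$ is a normal conditional expectation. Its pre-adjoint $E_*:M_*\ra \T(H)$ is a bounded linear map, and because $E$ is a norm-one projection, $E_*$ is an isometric embedding of $M_*$ into the predual $\T(H)=\B(H)_*$. By part $(2)$ of Proposition \ref{024}, $\T(H)$ has the RNP, and by part $(1)$ of the same proposition the RNP passes to closed subspaces; hence $M_*$, being isometric to a closed subspace of $\T(H)$, has the RNP. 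This is essentially the same mechanism already used in the proof of Proposition \ref{218}, so it should be routine.

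For the reverse implication $(1)\Rightarrow(2)$, the idea is to produce the conditional expectation from a norm-preserving linear lifting of the inclusion, exploiting the RNP to obtain such a lifting at the level of the preduals. The quotient map $q:\T(H)=\B(H)_*\twoheadrightarrow M_*$ is the pre-adjoint of the inclusion $M\hookrightarrow\B(H)$; what I want is a bounded right inverse, i.e.\ a norm-one section $s:M_*\ra \T(H)$ with $q\circ s=\id_{M_*}$. Dualizing, $s^*:\B(H)\ra M$ would then be a normal (weak$^*$-continuous) projection of norm one, hence a normal conditional expectation by Tomiyama's theorem. So the whole problem reduces to constructing a norm-one linear section $s$ of the quotient map $q$ that is moreover weak$^*$-continuous in the appropriate sense (so that it is a pre-adjoint).

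The key step, and the one where the RNP enters decisively, is the construction of this section. The standard device is to view a bounded linear map into $M_*$ as a vector measure, or a Bochner-integrable function, and use the RNP of $M_*$ to represent/lift it. Concretely, one expects to use the known characterization that a separable (or suitably reduced) predual has the RNP iff every operator from an $L^1$-space into it is representable, together with a lifting theorem for quotient maps onto spaces with the RNP; the RNP of $M_*$ guarantees that the identity on $M_*$, regarded as an operator into a quotient of $\T(H)$, admits a lifting with control on the norm. The main obstacle will be ensuring two things simultaneously: first, that the lift can be taken with norm \emph{exactly} one (so that the dual projection is genuinely a norm-one idempotent, which is what Tomiyama's theorem requires), and second, that the lift is not merely bounded but weak$^*$-to-weak$^*$ continuous, so that it arises as a pre-adjoint and yields a \emph{normal} conditional expectation rather than just a bounded one. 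Handling the normality/weak$^*$-continuity is the genuinely delicate point, since the RNP is an isometric/measure-theoretic property with no a priori weak$^*$ content; I would expect to recover it by choosing the lifting canonically (e.g.\ via a barycentre or an averaging argument over the unit ball of $M$) rather than by an abstract selection, so that weak$^*$-continuity is built in from the start.
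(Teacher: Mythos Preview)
Your argument for $(2)\Rightarrow(1)$ is fine and coincides with the paper's. The gap is entirely in $(1)\Rightarrow(2)$.

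The RNP of $M_*$ does not yield the section you are hoping for. The RNP is a statement about operators \emph{into} $M_*$ (namely, operators $L^1(\mu)\to M_*$ are representable); it says nothing about lifting maps \emph{out of} $M_*$ through a quotient. Concretely, the lifting you want would force $M_*$ to be complemented in $\T(H)$ whenever $M_*$ has the RNP, and that is simply false as a general principle: $\ell^2$ has the RNP (being reflexive) and is a quotient of $\ell^1$, yet the quotient map $\ell^1\twoheadrightarrow\ell^2$ admits no bounded linear right inverse, since $\ell^2$ is not isomorphic to a complemented subspace of $\ell^1$. So the ``lifting theorem for quotient maps onto spaces with the RNP'' you invoke does not exist, and the barycentre/averaging idea does not repair this --- there is no linear section to average towards. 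The difficulties you flag (achieving norm exactly one, obtaining weak$^*$-continuity) are therefore secondary; the primary obstacle is that a bounded section need not exist at all.

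The paper's route for $(1)\Rightarrow(2)$ is structural rather than lifting-theoretic. One invokes the known characterization (Taylor \cite{Taylor}) that $M_*$ has the RNP precisely when $M$ is atomic, i.e.\ $M\cong\bigoplus^{\ell^\infty}_i\B(H_i)$. In that case $M=N^{**}$ for the $C^*$-ideal $N=\bigoplus^{c_0}_iK(H_i)\subseteq M$, and a theorem of Tomiyama \cite{Tomi3} then produces the desired normal conditional expectation $\B(H)\to M$. So the RNP enters only to pin down the structure of $M$, after which the conditional expectation is built by hand from that structure, not from an abstract lifting.
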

\begin{proof} $(1)\Rightarrow (2):$ By \cite[Theorem 3.5]{Taylor}, $M$ is atomic,
 i.e., $M$ is an $l^\infty$-direct sum of $\B(H_i)$'s for
some Hilbert spaces $H_i$. So $M = N^{**}$ where $N=\oplus_\infty K(H_i)$ is
an ideal in $M$. Then, we have $(2)$ by \cite[Theorem 5]{Tomi3}.\\
$(2)\Rightarrow (1):$ The pre-adjoint map of the conditional
expectation defines an isometric embedding of $M_*$ into $\mathcal{T}(H)$.
Then in view of parts $(1)$ and $(2)$ of Proposition \ref{024}, we obtain $(2)$.
\end{proof}

\begin{corollary}\label{rnp11}
Let $\G$ be locally compact quantum group. Then the following are equivalent:
\begin{itemize}
 \item[(1)]
there exists a normal conditional expectation $E$ from $\B(\LT)$ onto $\LL$;
\item[(2)]
$\LO$ has the RNP.
\end{itemize}
\end{corollary}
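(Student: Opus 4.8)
The plan is to deduce Corollary \ref{rnp11} directly from Theorem \ref{727727} by setting $M = \LL$ and $H = \LT$. Since $\LL$ is a von Neumann algebra acting on the Hilbert space $\LT$, Theorem \ref{727727} applies verbatim: it gives the equivalence of $(\LL)_* = \LO$ having the RNP with the existence of a normal conditional expectation from $\B(\LT)$ onto $\LL$. The predual of $\linfqg$ is exactly $\LO$ by definition, so condition $(1)$ of the theorem becomes condition $(2)$ of the corollary, and condition $(2)$ of the theorem becomes condition $(1)$ of the corollary. The entire content of the corollary is therefore this identification.

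Concretely, I would write: \emph{Apply Theorem \ref{727727} with $M = \LL \subseteq \B(\LT)$. Since $\LO$ is by definition the predual $(\LL)_*$, the equivalence is immediate.} The only point worth a sentence is that $\LL$ really does act as a von Neumann algebra on $\LT$ in the required concrete way, which is guaranteed by the standard-form setup recalled in the Preliminaries (the identification of $L^2(\G,\varphi)$ and $L^2(\G,\psi)$ with the single Hilbert space $\LT$, and the fact that $\linfqg \subseteq \B(\LT)$).

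I do not anticipate any genuine obstacle here, since all the mathematical work has already been done in Theorem \ref{727727}; the corollary is a specialization. If anything, the only care needed is notational: matching $M_*$ with $\LO$ and $M$ with $\LL$, and confirming the direction of the conditional expectation (onto $\LL$, not onto some other subalgebra) agrees between the two statements. This is purely a matter of reading off the correspondence.

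Thus the proof is a one-line invocation, and I would simply state:
\begin{proof}
This is an immediate consequence of Theorem \ref{727727}, applied to the von Neumann algebra $M = \LL$ acting on $H = \LT$, upon recalling that $\LO = (\LL)_*$.
\end{proof}
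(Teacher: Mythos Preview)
Your proposal is correct and matches the paper's approach exactly: the paper gives no separate proof for this corollary, presenting it as an immediate specialization of Theorem~\ref{727727} with $M = \LL \subseteq \B(\LT)$ and $M_* = \LO$. The only cosmetic difference is that the numbering of the two equivalent conditions is swapped between the theorem and the corollary, which you already noted.
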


In Theorem \ref{3310} we gave a characterization of discreteness of $\G$ in terms of existence
of a normal and covariant conditional expectation.
By comparing that result with Corollary \ref{rnp11} above, we see that the covariance
accounts precisely for the difference between the RNP and discreteness,
for $L^1$-algebras of locally compact quantum groups.

Next theorem shows that although discreteness and the
RNP are not equivalent in general for $\LO$, but 
with extra conditions on $\G$, that could be the case.
\begin{theorem}
Let $\G$ be a compact Kac algebra. Then the following are equivalent:
\begin{itemize}
\item[(1)] $\LO$ has the RNP;
\item[(2)] $\G$ is finite.
\end{itemize}
\end{theorem}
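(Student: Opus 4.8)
The plan is to play the two directions against the two characterizations established just above, Corollary~\ref{rnp11} and Theorem~\ref{727727}, so as to reduce the whole statement to one structural question about the von Neumann algebra $\LL$.

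\textbf{Direction (2)$\Rightarrow$(1).} If $\G$ is finite then $\LL$ is finite-dimensional, hence so is its predual $\LO$, and every finite-dimensional Banach space has the RNP. (Alternatively: a finite $\G$ is discrete, so Theorem~\ref{3310} already supplies a normal conditional expectation $\B(\LT)\ra\LL$, whence the RNP follows from Corollary~\ref{rnp11}.) This direction is routine and I would dispatch it in one line.

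\textbf{Direction (1)$\Rightarrow$(2).} I would first invoke Corollary~\ref{rnp11}: the RNP of $\LO$ is equivalent to the existence of a normal conditional expectation $\B(\LT)\ra\LL$, and by (the proof of) Theorem~\ref{727727} this is in turn equivalent to $\LL$ being atomic, i.e. an $\ell^\infty$-direct sum of type~$I$ factors $\B(H_i)$. Here the Kac hypothesis enters decisively: the Haar weight of a compact Kac algebra is a faithful normal tracial state $\fee$ on $\LL$. Since a type~$I$ factor $\B(H_i)$ admits a normal tracial state only when $\dim H_i<\infty$, faithfulness of $\fee$ forces each $H_i$ to be finite-dimensional, so that $\LL=\bigoplus_{i\in I}M_{n_i}(\C)$ with every $n_i<\infty$ and $\sum_i\fee(z_i)=1$, where the $z_i$ are the minimal central projections.

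The remaining step, and the one I expect to be the main obstacle, is to show that the index set $I$ is finite, equivalently that $\LL$ is finite-dimensional; since $\G$ is compact, this is equivalent to $\G$ being discrete, whence $\G$ is finite by Theorem~\ref{228}. This is genuinely where compactness must be spent: atomicity together with a faithful normal trace is compatible with infinitely many blocks (e.g. $\bigoplus_n M_n$), and the Fell-group example recorded above shows that for a merely Kac, non-compact $\G$ the RNP does \emph{not} force discreteness. In particular the proof cannot proceed by softly averaging the conditional expectation into a covariant one in the sense of Theorem~\ref{3310}: the only natural averaging, pairing $\fee$ against the coaction $\Gamma$ on $\B(\LT)$, produces the expectation $(\id\ot\fee)\Gamma$ onto the fixed-point algebra $\LLL$ (cf. Proposition~\ref{212}) rather than onto $\LL$, while the dual averaging that would fix $\LL$ pointwise already presupposes a Haar state on $\LLL$, i.e. presupposes the desired conclusion.

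I would therefore resolve the crux through compact quantum group structure theory rather than soft averaging. Writing $\hat\G$ for the discrete dual, one has $\LLL=\bigoplus_{\pi}M_{d_\pi}(\C)$ indexed by the irreducible corepresentations $\pi$ of $\G$, and $\G$ is finite exactly when this index set is finite. The plan is to use coassociativity of $\Gamma$ and invariance of the trace $\fee$ to analyze the atoms $z_i$ of $\LL$: tracking a minimal projection through $\Gamma$ and the unitary antipode in order to produce, in the atomic setting, a projection of $\LL$ playing the role of the support of the counit, and then showing that its existence collapses $\hat\G$ to a compact quantum group, which, being also discrete, is finite. The delicate point — where I expect the real work to lie — is that the algebra decomposition of $\LL$ meets the coproduct data only through the Peter--Weyl machinery, so that turning the heuristic ``an atom of $\LL$ forces discreteness of $\G$'' into a rigorous argument is precisely the place where the compact Kac hypothesis is indispensable.
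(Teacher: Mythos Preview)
Your proposal matches the paper's approach exactly through the point where $\LL$ is shown to be atomic with finite-dimensional blocks $M_{n_i}$ (via Taylor's theorem and the traciality of the Haar state). You then correctly identify the remaining step --- showing that the index set $I$ is finite --- as the crux, and you correctly observe that a faithful finite trace alone does not force this: your example $\bigoplus_n M_n$ makes precisely that point. But your proposal stops here. You sketch a Peter--Weyl plan (track a minimal projection through $\Gamma$ and the unitary antipode, manufacture a support projection for the counit, collapse $\hat\G$) without executing any of it, so what you have written is not yet a proof.

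The paper resolves this step far more directly, still through the trace and without any coalgebra machinery: it asserts that the restriction of the Haar state $\fee$ to each block is \emph{the} trace $tr_i$ on $M_{n_i}$, so that $\fee = \sum_i tr_i$, and then finiteness of $\fee$ forces finitely many summands. No Peter--Weyl, no antipode, no counit. Your caution about this step is not entirely misplaced --- uniqueness of the trace on $M_{n_i}$ is only up to scalar, so pinning down the normalization to obtain $\fee = \sum_i tr_i$ rather than $\fee = \sum_i c_i\,tr_i$ requires something beyond ``$\fee$ is a faithful tracial state'', and the paper is terse here --- but the intended argument lives at the level of the Haar trace, not the structure theory you anticipate. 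The structural detour you propose would, if carried out, amount to proving discreteness and then invoking Theorem~\ref{228}; the paper's route bypasses this by arguing finite-dimensionality of $\LL$ directly.
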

\begin{proof}

$(2)\Rightarrow (1)$ is obvious.
$(1)$ implies, by \cite[Theorem 3.5]{Taylor}, that $\LL$ is atomic, i.e.,
$\LL = \oplus_\infty \B(H_i)$ for some Hilbert spaces $H_i$. Since $\G$
is a compact Kac algebra, the Haar weight $\fee$ is a finite faithful trace, hence
all $H_i$'s are finite-dimensional and the restriction of $\fee$ to each $\B(H_i)$ is
its unique trace. Thus, $\fee = \sum_i tr_i$, and since $\fee$ is finite, there can be only
finitely many summands. So, $\G$ is finite.
%Theorem \ref{727727}, that there is
%a completely contractive map $\Phi:\LO\ra\Tr$ (the preadjoint of the conditional expectation),
%which is right inverse to the canonical
%quotient map $\pi:\Tr\ra\LO$. Hence, the short exact sequence
%\[
%0\ra I\ra\T_\star(\G)\xrightarrow{\pi} \LO
%\]
%defined in the proof of Theorem \ref{124312},
%is admmisible. Since $\G$ is a compact Kac algebra, $\LO$ is biprojective, with a completely
%contractive map $\psi:\LO\ra\LO\tp\LO$ as a right inverse to the product
%$\mathfrak{m}:\LO\tp\LO\ra\LO$, by \cite[Theorem 4.2]{Daws}.
%Therefore, $\T_\star(\G)$ is biprojective by \cite[Lemma 4.2]{Pir}.
%Hence, $\G$ is finite by Theorem \ref{124312}.
\end{proof}
As a special case we obtain the following which can also be deduced from a result
by Lau--\"Ulger \cite[Theorem 4.3]{LauUlg} stating that for an [IN] locally compact group $G$,
the Fourier algebra $A(G)$ has the RNP if and only if $G$ is compact.
\begin{corollary}
Let $G$ be a discrete group. Then the following are equivalent:
\begin{itemize}
\item[(1)] the Fourier algebra $A(G)$ has the RNP;
\item[(2)] $G$ is finite.
\end{itemize}
\end{corollary}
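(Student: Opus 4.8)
The plan is to recognize $\ag$ as the convolution algebra of a compact Kac algebra and then apply the preceding theorem. Write $\G_a$ for the commutative locally compact quantum group with $L^\infty(\G_a)=\linf$, so that its dual $\hat\G_a$ satisfies $L^\infty(\hat\G_a)=\vng$ and hence $L^1(\hat\G_a)=\ag$. Since $G$ is discrete, $\hat\G_a$ is compact; and since $G$ is a genuine (classical) group, the unitary antipode on $\vng$ is the involution $x\mapsto \hat J x\hat J$ and the scaling group is trivial, so $\hat\G_a$ is in fact a Kac algebra. Thus $\hat\G_a$ is a compact Kac algebra whose convolution algebra is precisely $\ag$, and the hypotheses of the preceding theorem are met.

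The implication $(2)\Rightarrow(1)$ is immediate: if $G$ is finite, then $\ag$ is finite-dimensional and so trivially has the RNP. For $(1)\Rightarrow(2)$, I would apply the preceding theorem to $\hat\G_a$: being a compact Kac algebra, $\hat\G_a$ has the property that $\ag=L^1(\hat\G_a)$ has the RNP if and only if $\hat\G_a$ is finite. It then remains to translate finiteness of the quantum group $\hat\G_a$ into finiteness of the group $G$: $\hat\G_a$ is finite exactly when $\vng$ is finite-dimensional, equivalently when $\linf$ is finite-dimensional, equivalently when $G$ is finite. Chaining these equivalences yields $(1)\Rightarrow(2)$ and closes the loop.

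I expect the only delicate point to be confirming that $\hat\G_a$ genuinely satisfies the \emph{Kac} hypothesis of the preceding theorem, rather than merely being a compact quantum group. This is precisely where one uses that $G$ is an honest locally compact group and not an arbitrary quantum group: the dual of a classical group always carries an involutive antipode, so no scaling deformation intervenes and the theorem applies verbatim. As an independent cross-check, one may instead invoke \cite[Theorem 4.3]{LauUlg}: a discrete group is an [IN]-group, with $\{e\}$ serving as a compact invariant neighbourhood of the identity, whence $\ag$ has the RNP if and only if $G$ is compact; for a discrete group, compactness coincides with finiteness, recovering the stated equivalence.
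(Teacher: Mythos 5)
Your proposal is correct and takes essentially the same route as the paper: the corollary is presented there precisely as a special case of the preceding theorem, obtained by viewing $A(G)$ as the $L^1$-algebra of the compact Kac algebra dual to the discrete group $G$, so that the RNP for $A(G)$ is equivalent to finiteness of that Kac algebra, i.e., to finiteness of $G$. The paper also records the same alternative deduction from Lau--\"Ulger's theorem \cite[Theorem 4.3]{LauUlg} that you offer as a cross-check.
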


%    Bibliographies can be prepared with BibTeX using amsplain,
%    amsalpha, or (for "historical" overviews) natbib style.
\bibliographystyle{amsplain}
%    Insert the bibliography data here.

\end{document}